\def\@tocline#1#2#3#4#5#6#7{\relax
  \ifnum #1>\c@tocdepth 
  \else
    \par \addpenalty\@secpenalty\addvspace{#2}%
    \begingroup \hyphenpenalty\@M
    \@ifempty{#4}{%
      \@tempdima\csname r@tocindent\number#1\endcsname\relax
    }{%
      \@tempdima#4\relax
    }%
    \parindent\z@ \leftskip#3\relax \advance\leftskip\@tempdima\relax
    \rightskip\@pnumwidth plus4em \parfillskip-\@pnumwidth
    #5\leavevmode\hskip-\@tempdima
      \ifcase #1
       \or\or \hskip 1em \or \hskip 2em \else \hskip 3em \fi%
      #6\nobreak\relax
    \dotfill\hbox to\@pnumwidth{\@tocpagenum{#7}}\par
    \nobreak
    \endgroup
  \fi}
 \numberwithin{equation}{section}
\def\bR{{\mathbb{R}}}
\def\bZ{{\mathbb{Z}}}
\def\bN{{\mathbb{N}}}
\def\D{{\Delta}}
\def\G{{\Gamma}}
\def\cB{{\mathcal{B}}}
\def\cC{{\mathscr{C}}}
\def\cD{{\mathcal{D}}}
\def\cH{{\mathcal{H}}}
\def\cM{{\mathcal{M}}}
\def\cP{{\mathscr{P}}}
\def\cW{{\mathcal{W}}}
\def\bbeta{b\beta}
\def\one{\mathds{1}}
\def\ve{\varepsilon}
\renewcommand{\d}{{\partial}}
\def\lec{\lesssim}
\def\gec{\gtrsim}
\DeclareMathOperator{\diam}{diam}
\def\dist{\mathop\mathrm{dist}} 						
\def\supp{\mathop\mathrm{supp}}					
\newcommand{\ps}[1]{\left( #1 \right)}
\newcommand{\cnj}[1]{\overline{#1}}
\def\Claim{ {\bf Claim: }}
\def\dq{\Delta_Q}
\def\XXint#1#2#3{{\setbox0=\hbox{$#1{#2#3}{\int}$ }
\vcenter{\hbox{$#2#3$ }}\kern-.58\wd0}}
\theoremstyle{plain}
\newtheorem{theorem}{Theorem}
\newtheorem{lemma}[theorem]{Lemma}
\newtheorem{proposition}[theorem]{Proposition}
\theoremstyle{definition}
\newtheorem{definition}[theorem]{Definition}
\newtheorem{remark}[theorem]{Remark}
\numberwithin{equation}{section}
\numberwithin{theorem}{section}
\def\Claim{ {\bf Claim: }}
  \DeclareFontFamily{U}{mathb}{\hyphenchar\font45} 
\DeclareFontShape{U}{mathb}{m}{n}{
      <5> <6> <7> <8> <9> <10> gen * mathb
      <10.95> mathb10 <12> <14.4> <17.28> <20.74> <24.88> mathb12
      }{}
\DeclareSymbolFont{mathb}{U}{mathb}{m}{n}
\DeclareMathSymbol{\toitself}      {3}{mathb}{"FD}  
\begin{document}

\title[Uniform domains with rectifiable boundaries]{Uniform domains with rectifiable boundaries and harmonic measure}

\author{Mihalis Mourgoglou}
\address{Departament de Matem\`atiques\\ Universitat Aut\`onoma de Barcelona and Centre de Reserca Matem\` atica\\ Edifici C Facultat de Ci\`encies\\
08193 Bellaterra (Barcelona) }
\email{mmourgoglou@crm.cat}
\keywords{Harmonic measure, absolute continuity, uniform, nontangentially accessible (NTA) domains, rectifiability, porosity}
\subjclass[2010]{31A15,28A75,28A78}

\maketitle

\begin{center}
\textsl{In memory of G. I. Chatzopoulos}
\end{center}

\begin{abstract}
We assume that $\Omega \subset \bR^{d+1}$, $d \geq 2$, is a uniform domain with lower Ahlfors-David $d$-regular and $d$-rectifiable boundary. We show that if $\cH^d|_{\partial \Omega}$ is locally finite, then the Hausdorff measure $\cH^d$ is absolutely continuous with respect to the harmonic measure $\omega$ on $\d \Omega$, apart from a set of  $\cH^d$-measure zero.
\end{abstract}
\tableofcontents

\section{Introduction}\label{sec:intro}

Determining (mutual) absolute continuity of the harmonic measure associated to the Laplace operator and the $d$-Hausdorff measure in domains with ``rough" boundaries has been a hot topic of research in mathematical analysis for almost four decades now. The interest in such questions can be justified partially by the connection between (a quantitative version of) the absolute continuity of the harmonic measure and the well-posedness of the Dirichlet problem with data in some $L^p$ space (even for elliptic operators of divergence form with merely bounded real coefficients). 

Already in 1916, F. and M. Riesz \cite{RR} showed that for simply connected planar domains, bounded by a Jordan curve, whose boundary has finite length, harmonic measure and arc-length are mutually absolutely continuous. Their theorem was improved by Lavrentiev \cite{Lav} demonstrating that in a simply connected domain in the complex plane, bounded by a chord-arc curve, the harmonic measure is in the $A_\infty$ class of Muckehoupt weights. Bishop and Jones \cite{BJ} proved a local version of F. and M. Riesz theorem by showing that if $\Omega$ is a simply connected planar domain and $\Gamma$ is a curve of finite length, then $\omega\ll \cH^{1}$ on $\d\Omega\cap \Gamma$, where $\omega$ stands for the harmonic measure. They also give an example of a domain $\Omega$ whose boundary is contained in a curve of finite length, but $\cH^{1}(\d\Omega)=0<\omega(\d\Omega)$, thus showing that some sort of connectedness in the boundary is required. 

In higher dimensions, the situation is a lot more delicate. The obvious generalization to higher dimensions is false due to examples of Wu and Ziemer: they construct topological two-spheres in $\mathbb{R}^{3}$ with boundaries of finite Hausdorff measure $\cH^{2}$ where either harmonic measure is not absolutely continuous with respect to $\cH^{2}$ \cite{Wu} or $\cH^{2}$ is not absolutely continuous with respect to harmonic measure \cite{Z}, respectively. In the affirmative direction, Dahlberg shows in \cite{Da} that in a Lipschitz domain, the harmonic measure and the $d$-Hausdorff measure restricted to the boundary are $A_\infty$-equivalent. The same result was proved by  David and Jerison in \cite{DJ90}  under the assumptions that $\Omega\subset \mathbb{R}^{d+1}$ is an NTA domain and $\d\Omega$ is Ahlfors-David regular. Recently, Azzam, Hofmann, Martell, Nystr\"om and Toro \cite{AHMNT} showed that any uniform domain with uniformly rectifiable boundary is an NTA domain and thus, $\omega \in A_\infty$ by \cite{DJ90} (a direct proof of the $A_\infty$-equivalence between $\omega$ and $\cH^d|_{\d \Omega}$ in this case was given earlier by Hofmann and Martell \cite{HM12}; the converse implication is proved in \cite{HMUT} and a stronger version of it in \cite{HM2}). One can also find similar results for domains with uniformly rectifiable boundaries (without the uniformity assumption) in \cite{BH}. Hofmann, Martell and Toro \cite{HMT} recently obtained a characterization of uniform domains with uniformly rectifiable boundaries via the $A_\infty$ equivalence of the elliptic harmonic measure and the $d$-Hausdorff measure (for second order elliptic operators of divergence form with real, locally Lipschitz coefficients that satisfy a natural Carleson condition). 

At first look, Ahlfors-David regularity seems superfluous for establishing absolute continuity in NTA domains, and in some sense it is: in \cite{Badger12}, Badger shows that if one merely assumes $\cH^{d}|_{\d\Omega}$ is locally finite and $\Omega\subset \mathbb{R}^{d+1}$ is NTA, then we still have $\cH^{d}|_{\d\Omega} \ll \omega$. He also shows that  $\omega \ll \cH^{d}|_{\d\Omega}\ll \omega$ on the set
\[\{x\in \d\Omega: \liminf_{r\rightarrow 0}\cH^{d}(B(x,r)\cap \d\Omega)/r^{d}<\infty\}.\]
The question whether NTA-ness of the domain is enough to obtain $\omega \ll \cH^{d}|_{\d\Omega}$ was already answered in the negative by Wolff in \cite{Wo}, with the impressive construction of the so-called Wolff snowflakes. Although, there was a question in \cite{Badger12} whether this could be true under the additional assumption that $\cH^{d}|_{\d\Omega}$ is locally finite. Very recently Azzam, Tolsa and the author \cite{AMT} demonstrated that there exists an NTA domain with very flat boundary for which $\cH^{d}|_{\d\Omega}$ is locally finite and yet, one can find a set $E \subset \d \Omega$ such that $\omega(E)>0=\cH^{d}(E)$.

However, it was left open whether one can show that $\cH^{d}|_{\d\Omega} \ll \omega$ relaxing the geometric conditions of the domain. It is this matter that we will deal with in the present paper. In fact, we show that $\cH^{d} \ll \omega$ on $\d\Omega$ up to a set of $\cH^{d}$-measure zero, under the assumption that the domain is uniform and its boundary is lower Ahlfors-David $d$-regular and $d$-rectifiable (all the definitions can be found in section \ref{sec:backgr}).

\begin{theorem}\label{th:main}
Suppose that $d\geq 2$. Let $\Omega \subset \bR^{d+1}$ be a uniform domain with lower Ahlfors David $d$-regular and $d$-rectifiable boundary $\d \Omega$. If $\cH^d|_{\partial \Omega}$ is locally finite, then $\cH^d|_{\partial \Omega} \ll \omega$, $\cH^d$-a.e. on $\d \Omega$.
\end{theorem}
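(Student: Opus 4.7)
The plan is to localize the problem and compare with Lipschitz subdomains, on which Dahlberg's theorem provides mutual absolute continuity between harmonic measure and $\cH^d$. Since the conclusion is $\sigma$-local on $\d\Omega$, it suffices to produce a countable Borel cover $\{F_i\}_{i\in\bN}$ of $\d\Omega$ (modulo an $\cH^d$-null set) such that for each $i$, any Borel $E \subset F_i$ with $\omega(E) = 0$ satisfies $\cH^d(E) = 0$.

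First, I would extract a full-$\cH^d$-measure set $T \subset \d\Omega$ of ``flat'' points. By the tangent-plane theorem for $d$-rectifiable sets with locally finite $\cH^d$, at $\cH^d$-a.e.\ $x \in \d\Omega$ there exist an approximate tangent $d$-plane $P_x$ and $\cH^d$-density one at $x$. Combining this with lower Ahlfors-David regularity upgrades the approximate tangent to a Reifenberg-type flatness: for every $\ve > 0$ there exists $r_\ve(x) > 0$ so that
\[
\d\Omega \cap B(x, r) \subset \ck{y : \dist(y, x + P_x) \leq \ve r} \quad \text{for every } 0 < r < r_\ve(x),
\]
since any $y \in \d\Omega$ at distance $>\ve r$ from $x + P_x$ would carry, by lower ADR, a definite chunk of $\cH^d$-mass off the tangent plane, contradicting the approximate tangent statement at $x$. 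Uniformity further rules out the pathological ``wrapping'' scenario (slit-type behavior) at such points, giving at $x \in T$ a coherent $\Omega$-side of $P_x$ equipped with interior corkscrews.

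The technical heart of the argument is to build, for each $x \in T$ and each sufficiently small $r$, a Lipschitz subdomain $\Omega_{x,r} \subset \Omega$ whose boundary contains a Borel set $F_{x,r} \subset \d\Omega \cap B(x, r)$ satisfying $\cH^d(F_{x,r}) \geq (1 - \eta)\,\cH^d(\d\Omega \cap B(x, r))$, while $\d\Omega_{x,r} \setminus F_{x,r}$ lies strictly inside $\Omega$. Using the flatness from the previous step, interior corkscrews on the $\Omega$-side of $P_x$, and Harnack chains, one extracts a small-Lipschitz-constant graph $\Sigma$ over $P_x$ that coincides with $\d\Omega$ on $F_{x,r}$ and otherwise stays in $\Omega$, pinching off the bad deviations of $\d\Omega$ from the tangent plane; closing $\Sigma$ by a cap inside $\Omega$ yields $\Omega_{x,r}$. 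This is in the spirit of the David-Semmes corona decomposition and the Lipschitz subdomain constructions of the Hofmann-Martell-Toro circle of ideas, now adapted to the non-quantitative tangent regime afforded by mere rectifiability.

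Granted $\Omega_{x,r}$, the proof closes via the maximum principle and Dahlberg's theorem. For any Borel $E \subset F_{x,r}$ with $\omega(E) = 0$, the two harmonic measures $\omega_\Omega(\cdot, E)$ and $\omega_{\Omega_{x,r}}(\cdot, E)$ agree on $F_{x,r}$ while the latter vanishes on $\d\Omega_{x,r} \setminus F_{x,r}$, so the maximum principle in $\Omega_{x,r}$ gives $\omega_{\Omega_{x,r}}(\cdot, E) \leq \omega_\Omega(\cdot, E)$ in $\Omega_{x,r}$, and hence $\omega_{\Omega_{x,r}}(E) = 0$; Dahlberg's theorem on the Lipschitz subdomain then forces $\cH^d(E) = 0$. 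A Vitali-type covering of $T$ by the sets $F_{x,r}$ extracts a countable subcover exhausting $T$ modulo $\cH^d$-null, which is the required family $\{F_i\}$. The principal obstacle is the subdomain construction: one must simultaneously capture most of $\d\Omega \cap B(x, r)$ as a Lipschitz graph and verify that the complementary boundary pieces can be routed through $\Omega$ via uniformity, with the Lipschitz constant small enough for Dahlberg to apply—this interaction between rectifiability and the uniform structure is what absorbs most of the paper's technical work.
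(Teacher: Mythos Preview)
Your overall scheme---localize, build an interior subdomain on which a known $\cH^d\ll\omega$ theorem applies, transfer via the maximum principle, then exhaust $\d\Omega$---is exactly the paper's strategy. The implementation, however, differs in two coupled respects. First, the paper does not build Lipschitz subdomains and does not invoke Dahlberg: around a suitable compact $E'\subset\d\Omega$ it assembles a uniform subdomain $\Omega_{E'}^{-}\subset\Omega$ out of (dilated) Whitney cubes of $\Omega$, proves $\Omega_{E'}^{-}$ is NTA with $\cH^d(\d\Omega_{E'}^{-})<\infty$, and then applies Badger's theorem. Second, the paper does not argue pointwise via tangent planes of $\d\Omega$; it writes $\d\Omega$ (mod an $\cH^d$-null set) as pieces of bi-Lipschitz images $\Gamma=F(\bR^d)$, each of which is \emph{uniformly} rectifiable, and uses the BWGL criterion on $\cD(\Gamma)$ together with a porosity-type Carleson estimate to locate, inside every relevant scale, a subcube that is simultaneously bilaterally flat in $\Gamma$ and $E'$-dense; on such a cube the uniformity of $\Omega$ produces an exterior corkscrew ball. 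The lower ADR hypothesis enters not for your flatness upgrade but to prove $\cH^d(\d\Omega_{E'}^{-})<\infty$ via a packing argument on the boundary Whitney cubes.

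The step you flag as the ``principal obstacle'' is in fact a genuine gap along your route. Your lower-ADR-plus-approximate-tangent argument correctly yields one-sided slab containment $\d\Omega\cap B(x,r)\subset\{\dist(\cdot,x+P_x)\le\ve r\}$, but this neither forces $\d\Omega$ to be a graph over $P_x$ nor says where within the slab the rest of $\d\Omega$ sits; there is no mechanism ensuring a small-Lipschitz graph over $P_x$ can be routed through $\Omega$ off the good set $F_{x,r}$ rather than hitting $\d\Omega$ or landing in $\overline{\Omega}^{\,c}$. The corona/Lipschitz-approximation machinery you cite (David--Jerison, Hofmann--Martell--Toro) lives in the Ahlfors-regular, uniformly rectifiable world and feeds on scale-invariant flatness, which mere rectifiability does not provide. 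The paper's key device is precisely to recover that scale-invariance by transferring the analysis to the UR images $\Gamma$ rather than to $\d\Omega$ itself, and to relax the target from ``Lipschitz'' to ``NTA with locally finite boundary measure'' so that a Whitney-cube construction plus Badger's theorem closes the argument.
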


The lower Ahlfors-David regularity may seem to be a technical condition but in fact, it is not. Indeed, on the one hand, the boundary of an NTA domain is always lower Ahlfors-David $d$-regular, while on the other, the conclusion of Theorem \ref{th:main} may fail once we relax the lower Ahlfors-David $d$-regularity assumption. We will show that Theorem \ref{th:main} is sharp in the following sense: 

For each $s \in (d-1,d)$, we can construct a uniform domain $\Omega \subset \bR^{d+1}$ such that
\begin{enumerate}
\item $\d \Omega$ is lower $s$-Ahlfors-David regular,
\item $\cH^d|_{\partial \Omega}$ is locally finite,
\end{enumerate}
and there exists a set $E \subset \partial \Omega$ for which $\cH^d(E)>0=\omega(E)$. 

An example of such a domain, constructed by J. Azzam, will be presented in the Appendix \ref{sec:Appendix}.

While putting the final touches to this manuscript , Jose  Mar\'ia Martell informed us that in a joint work with Akman, Badger and Hofmann in preparation, they have obtained independently Theorem \ref{th:main} under slightly stronger assumptions (in particular, assuming that $\d \Omega$ is Ahlfors-David $d$-regular).

\

\subsection*{Acknowledgements} We warmly thank J. Azzam for his encouragement and several discussions pertaining to this work and rectifiability, as well as for explaining the techniques developed in his earlier work on the same topic. The author was supported by the ERC grant 320501 of the European Research Council (FP7/2007-2013).

\section{Background material}\label{sec:backgr}
\begin{itemize}
\item If $A,B\subset \bR^{d+1}$, we let 
\[\dist(A,B)=\inf\{|x-y|:x\in A,y\in B\}, \;\; \dist(x,A)=\dist(\{x\},A),\]
\item $B(x,r)$ stands for the open ball of radius $r$ which is centered at $x$. We also denote by $\lambda B(x,r)=B(x,\lambda r)$.
\item  We will write $p \lesssim q$ if there is $C>0$ so that $p \leq C q$ and $p \lesssim_{M} q$ if the constant $C$ depends on the parameter $M$. We write $p \sim q$ to mean $p\lesssim q \lesssim p$ and define $p\sim_{M} q$ similarly. 
\item For $A\subset \bR^{d+1}$ and $s \in (0, d+1]$ we set
\[\cH^{s}_{\delta}(A)=\inf\left\{\sum r_{i}^{s}: A\subset \bigcup B(x_{i},r_{i}),x_{i}\in\bR^{d+1}\right\}.\]
Define the {\it $s$-dimensional Hausdorff measure} as
\[\cH^{s}(A)=\lim_{\delta\downarrow 0}\cH^{s}_{\delta}(A)\]
and the {\it $s$-dimensional Hausdorff content} as $\cH^{s}_{\infty}(A)$. See Chapter 4 of \cite{Mattila} for more details. \\
\end{itemize}

\begin{definition}
We say that a set $E \subset \bR^{d+1}$ is {\it Ahlfors-David $s$-regular} ({\it $s$-ADR}) if there is $C\geq 1$ so that
\begin{equation}
r^{s}/C\leq  \cH^{s}(B(x,r))\leq Cr^{s}\mbox{ for all }x\in E, 0<r<\diam E.
\label{e:regular}
\end{equation}

If a set $E \subset \bR^{d+1}$ satisfies only the lower (resp. upper) bound we shall call it {\it lower} (resp. {\it upper}) {\it Ahlfors-David $s$-regular}. 

\end{definition}
\begin{definition}
A domain $\Omega$ is called {\it uniform} if there is $C_\Omega>0$ so that for every $x,y\in \cnj{\Omega}$ there is a path $\gamma\subset\Omega$ connecting $x$ and $y$ such that
\begin{enumerate}[(a)]
\item if $\ell(\gamma)$ is the length of $\gamma$, then $ \ell(\gamma) \leq C_\Omega|x-y|$ and
\item for $z \in \gamma$, $\dist(z,\d\Omega)\geq \dist(z,\{x,y\})/C_\Omega$. 
\end{enumerate}
We call any such path a {\it good curve} connecting $x$ and $y$.
\end{definition}

\begin{definition}
We say that $\Omega$ satisfies the {\it interior Corkscrew condition} if for all $x \in \d \Omega$ and $r\in(0, \diam\d\Omega)$ there is a ball $B(z,r/C)\subseteq \Omega\cap B(x,r)$. We say that $\Omega$ satisfies the {\it exterior Corkscrew condition} if there is a ball $B(y,r/C)\subseteq B(x,r)\backslash \overline \Omega$ for all $x\in \d\Omega$ and $r\in(0,\diam \d\Omega)$. 
\label{d:cork}
\end{definition}

\begin{definition}
A domain $\Omega$ is called {\it non-tangentially accessible (NTA)} \cite{JK} if it is uniform and satisfies the exterior Corkscrew condition.
\label{d:NTA}
\end{definition}

We introduce  the notion of ``dyadic cubes" for a metric space (we may refer to them as ``metric cubes"). We will use the construction of Hyt\"onen and Martikainen from \cite{HytMart12}, which refines the originals of Christ \cite{Christ-T(b)} and David \cite{David88}.

\begin{theorem}\label{th:dyadic}
For $c_{0}<1/1000$, the following holds. Let $c_{1}=1/500$ and $\Sigma$ be a metric space. For each $n\in\bZ$ there is a collection $\cD_{n}$ of ``cubes,'' which are Borel subsets of $\Sigma$ such that
\begin{enumerate}
\item $\Sigma=\bigcup_{\Delta\in \cD_{n}}\Delta$ for every $n$,
\item if $\Delta,\Delta'\in \cD=\bigcup \cD_{n}$ and $\Delta\cap\Delta'\neq\emptyset$, then $\Delta\subseteq \Delta'$ or $\Delta'\subseteq \Delta$,
\item for $\Delta\in \cD_{n}$, there is $x_{\Delta}\in X_{n}$ so that if $B_{\Delta}=B(x_{\Delta},5c_{0}^{n})$, then
\[c_{1}B_{\Delta}\subseteq \Delta\subseteq B_{\Delta}.\]
\end{enumerate}
\label{t:Christ}
\end{theorem}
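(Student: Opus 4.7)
The plan is to follow the classical Christ-type construction of metric dyadic cubes, paying attention to the quantitative constants so as to obtain the explicit ratio $c_1 = 1/500$ between inner and outer balls. First, for each $n \in \bZ$, fix (by Zorn's lemma) a maximal $c_0^n$-separated subset $X_n \subset \Sigma$: distinct $x, x' \in X_n$ satisfy $d(x,x') \geq c_0^n$, and by maximality every $y \in \Sigma$ lies within distance $c_0^n$ of some point of $X_n$. Next, define a parent map $\pi_n \colon X_n \to X_{n-1}$ by assigning to each $x \in X_n$ a nearest point $\pi_n(x) \in X_{n-1}$ (breaking ties via a fixed well-ordering), so that $d(x, \pi_n(x)) < c_0^{n-1}$. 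Iterating $\pi_n$ gives each $x \in X_n$ a tower of ancestors at every coarser scale, and I write $y \prec x$ if $y$ is a descendant of $x$ in this tree.

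The cubes are then built by defining, for each $y \in \Sigma$, a consistent assignment $\xi_n(y) \in X_n$ satisfying $\xi_{n-1}(y) = \pi_n(\xi_n(y))$ for every $n$. One way: for each sufficiently fine scale $N$, let $\xi_N^{(N)}(y)$ be the nearest point of $X_N$ to $y$ (with canonical tie-breaking) and for $n < N$ set $\xi_n^{(N)}(y) = \pi_n \circ \pi_{n+1} \circ \cdots \circ \pi_N (\xi_N^{(N)}(y))$. Because $X_n$ is locally finite at each scale, $\xi_n^{(N)}(y)$ stabilizes as $N \to \infty$; call the limit $\xi_n(y)$. Set
\[
\Delta_x^n = \{ y \in \Sigma : \xi_n(y) = x \}, \qquad \cD_n = \{ \Delta_x^n : x \in X_n \}.
\]
Property (1) is immediate, and property (2) follows from the parent-consistency $\xi_{n-1} = \pi_n \circ \xi_n$, which forces each scale-$n$ partition to refine the scale-$(n-1)$ partition. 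Borel measurability of each $\Delta_x^n$ follows since the $\xi_n^{(N)}$ are Borel (nearest-point assignments to a countable set) and $\xi_n$ is a pointwise limit.

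For (3), the outer inclusion $\Delta_x^n \subseteq B(x, 5 c_0^n)$ is a telescoping estimate: if $\xi_n(y) = x$, then for any $k > n$,
\[
d(\xi_k(y), x) \leq \sum_{j=n}^{k-1} c_0^j < \frac{c_0^n}{1-c_0}, \qquad d(y, \xi_k(y)) \leq c_0^k,
\]
so letting $k \to \infty$ gives $d(y,x) \leq c_0^n/(1-c_0) < 2 c_0^n \leq 5 c_0^n$ since $c_0 < 1/1000$. The main obstacle is the inner inclusion $B(x, c_0^n/100) \subseteq \Delta_x^n$: one must show that $d(y,x) < c_0^n/100$ forces $\xi_n(y) = x$, which in turn demands that at every finer scale $k > n$ the tower choice $\xi_k(y)$ lies among the descendants of $x$ rather than of some nearby $x' \in X_n$.

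The risk, and the reason this is nontrivial, is that the maximal-net and parent choices could conspire to make a point $y$ near $x$ be assigned (via $\xi_N^{(N)}$) to a point $x_N \in X_N$ whose scale-$n$ ancestor is some $x' \neq x$ with $d(x, x') \geq c_0^n$. The Hyt\"onen--Martikainen refinement addresses this by (a) always taking $\pi_n(x)$ to be the \emph{nearest} point of $X_{n-1}$ with a fixed tie-breaking rule, and (b) exploiting the geometric-series gap between the target inner radius $c_0^n/100$ and the cumulative displacement $c_0^n/(1-c_0)$ introduced by the parent map. A competitor analysis then shows that for $d(y,x) < c_0^n/100$, every candidate tower descending from $x'$ lies strictly farther from $y$ than the tower descending from $x$ at all sufficiently fine scales, which (together with the stabilization of $\xi_n^{(N)}$) forces $\xi_n(y) = x$. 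This is where the specific numerical constants $c_0 < 1/1000$ and $c_1 = 1/500$ are pinned down, and it is the main technical core of the proof.
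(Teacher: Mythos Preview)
The paper does not give its own proof of this theorem: it is quoted as a known result, attributed to Hyt\"onen--Martikainen \cite{HytMart12} (refining Christ \cite{Christ-T(b)} and David \cite{David88}), and used as a black box. So there is nothing in the paper to compare your argument against beyond the citation.

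As a standalone sketch your outline follows the standard Christ-type construction, but there is a genuine gap at the stabilization step. You assert that ``because $X_{n}$ is locally finite at each scale, $\xi_{n}^{(N)}(y)$ stabilizes as $N\to\infty$.'' Local finiteness (which itself requires a geometric doubling hypothesis you have not stated) only tells you the values $\xi_{n}^{(N)}(y)$ range over a finite set; it does not prevent the sequence from oscillating among those finitely many candidates as $N$ grows. Nothing in your parent-map definition forces $\xi_{N}^{(N')}(y)$ to agree with $\xi_{N}^{(N)}(y)$ for $N'>N$: the nearest point of $X_{N'}$ to $y$ may have a scale-$N$ ancestor different from the nearest point of $X_{N}$ to $y$. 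The actual constructions in \cite{Christ-T(b)} and \cite{HytMart12} avoid this by either building the cubes set-theoretically (as closures of half-open ``Voronoi''-type regions, then assigning boundary points by a careful rule) or by fixing a total order on the centres and defining the partition top-down rather than via a limit from below.

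Your treatment of the inner-ball inclusion $c_{1}B_{\Delta}\subseteq\Delta$ is also only a sketch: you correctly identify it as the crux and describe the shape of the competitor analysis, but you do not carry it out, and it is precisely this computation that pins down the numerology $c_{0}<1/1000$, $c_{1}=1/500$. If you want a self-contained proof you should consult \cite{HytMart12} directly for the explicit estimates.
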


For $\Delta\in \cD_{n}$, define $\ell(\Delta)=5c_{0}^{n}$, so that $B_{\Delta}=B(x_{\Delta},\ell(\Delta))$. 
\begin{remark}
For $\Delta\in \cD_{n}$ and $\Delta'\in \cD_{m}$, we have $\ell(\Delta)/\ell(\Delta')=c_{0}^{n-m}$. 
\end{remark}
\begin{remark}
If $\Sigma$ is an ADR set then we may take $c_{0}=1/2$.
\end{remark}
We recall now the notion of {\it rectifiability} and its quantitative analogue ({\it uniform rectifiability}).

\begin{definition}\label{def:rec-lip}
If $E\subseteq \bR^{d+1}$ is a Borel set, we say that $E$ is {\it $n$-rectifiable} if $\cH^n(E\backslash \bigcup_{i=1}^{\infty} \Gamma_{i})=0$ where $\Gamma_{i}=f_{i}(E_{i})$, $E_{i}\subseteq \bR^{n}$, and $f_{i}:E_{i}\rightarrow \bR^{d+1}$ is Lipschitz. 
\end{definition}

One can find several characterizations of uniformly rectifiable sets in \cite{DS} and \cite{of-and-on}. We state here only two of them.

\begin{definition}\label{d:ur}
An Ahlfors-David $n$-regular set $\G \subseteq \bR^{d+1}$ is called {\it uniformly $n$-rectifiable} if there are constants $L>0$ and $c \in (0,1)$ such that, for all $x\in \G$ and $r\in(0,\diam \G)$, there exist $E\subseteq B(x,r)\cap \G$ with $\cH^{n}(E)\geq cr^{n}$ and an $L$-bi-Lipschitz embedding $g:E\rightarrow \bR^{n}$.
\end{definition}

\begin{remark}
If $\G$ is a bi-Lipschitz image of $\bR^{n}$, then it is uniformly $n$-rectifiable. 
\end{remark}

The characterisation that will be most convenient for us is the one given in terms of {\it bilateral $\beta$-numbers}: 

\[
\bbeta_{\G}(\Delta, P)={\sup_{x\in MB_\D \cap \G}\dist(x,P)/\ell(\D) + \sup_{y\in MB_\D\cap P} \dist(y, \G)/\ell(\D)},\]
where $P$ is an $n$-plane and $MB_\D$ stands for the ball $B(x_\D, M\ell(\D))$.

\begin{remark}
By the local compactness of the Grassmanian and the continuity of $\bbeta(\D,P)$ in $P$, there exists $P$ that infimizes $\bbeta(\D, P)$, and we define
\[\bbeta_{\G}(\D)=\inf_P \bbeta_{\G}(\D, P).\]
\end{remark}

\begin{theorem} \cite[Theorem 2.4]{of-and-on}
Let $\G$ be an Ahlfors-David $n$-regular set in $\bR^{d+1}$. Then $\G$ is uniformly rectifiable if and only if for every pair of positive constants $\ve\ll1$ and $M \gg 1$, there is a disjoint decomposition $\cD(\G) = \mathcal G \cup \cB$, such that the cubes in $\cB$ satisfy the a Carleson packing condition
\begin{equation}\label{eq:bwgl-packing}
\sum_{\D' \subset \D: \D' \in \cB} \cH^n(\D') \lesssim_{\ve, M }\cH^n(\D),\,\,\text{for all}\,\, \D \in \cD(\G),
\end{equation}
and such that for every $\D \in \mathcal G$, we have that $\bbeta_{\G}(\Delta)<\ve$.
\label{th:BWGL}
\end{theorem}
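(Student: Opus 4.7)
The plan is to prove the equivalence using the David--Semmes corona decomposition as a bridge between the bilateral $\beta$-packing condition and uniform rectifiability. For $(\Leftarrow)$, small bilateral $\beta$-numbers on good cubes plus a stopping-time construction yield Lipschitz graphs that cover big pieces of $\G$, hence UR via the big-pieces-of-Lipschitz-graphs (BPLG) characterization; for $(\Rightarrow)$, unpacking the corona decomposition that a UR set must admit produces small bilateral $\beta$'s on good cubes.

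For the forward direction, assume $\G$ is uniformly $n$-rectifiable in the sense of Definition~\ref{d:ur}. Invoke the David--Semmes corona decomposition: one partitions $\cD(\G)$ into a family $\cB$ of bad cubes satisfying a Carleson packing condition and a union of disjoint coherent trees whose tops are Carleson-packed. On each tree $\cT$ there is a Lipschitz graph $\Gamma_\cT$ of arbitrarily small Lipschitz constant $\eta$ such that for every $\D \in \cT$, $\G \cap MB_\D$ lies within $\eta\,\ell(\D)$ of $\Gamma_\cT$ and $\Gamma_\cT \cap MB_\D$ lies within $\eta\,\ell(\D)$ of $\G$. Taking $P$ to be the tangent plane to $\Gamma_\cT$ at its closest point to $x_\D$, the small Lipschitz constant confines $\Gamma_\cT \cap MB_\D$ to an $O(\eta\,\ell(\D))$-neighborhood of $P$; combined with the bilateral closeness of $\G$ and $\Gamma_\cT$, this yields $\bbeta_\G(\D, P) < \ve$ once $\eta$ is chosen small enough relative to $\ve$ and $M$.

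For the reverse direction, fix $R \in \cD(\G)$ good and a plane $P_R$ minimizing $\bbeta_\G(R,\cdot)$. Build a stopping-time family inside $R$ by descending and halting at a cube $\D$ when (i) $\D \in \cB$, (ii) the angle between $P_R$ and a near-minimizing $P_\D$ exceeds a fixed $\ve_0$, or (iii) $x_\D$ drifts too far from $P_R$. The hypothesis gives Carleson packing for (i); the smallness of bilateral $\beta$'s along chains of ancestors forces Carleson packing for (ii) and (iii) via a telescoping estimate, since every unit of plane rotation or drift is ``paid for'' by accumulated $\beta$-deviation whose total inside $R$ is at most $C\cH^n(R)$. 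On the non-stopped portion, the cubes concentrate near $P_R$ and the bilateral flatness lets one realize them as a Lipschitz graph over $P_R$ with small constant; critically, the \emph{lower} half of the bilateral inequality guarantees that this graph hits a definite fraction of $\G \cap B_R$, providing BPLG and hence uniform rectifiability.

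The principal obstacle lies in the reverse direction: one must verify that criteria (ii) and (iii) produce Carleson-packed stopping cubes, and that the extracted Lipschitz graph parametrizes a uniformly large fraction $c\,r^{n}$ of $\G \cap B(x,r)$ as required by Definition~\ref{d:ur}. The bilateral nature of $\bbeta$ is essential here: with only one-sided control, the minimizing plane could stay far from much of $\G$, and the constructed graph would parametrize only a tiny portion of the set. The coherence lemmas ensuring that stopping planes interpolate compatibly across scales are the technical heart of the David--Semmes machinery and constitute the main bookkeeping difficulty.
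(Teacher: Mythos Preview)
The paper does not prove this statement at all: Theorem~\ref{th:BWGL} is quoted verbatim from David--Semmes \cite[Theorem~2.4]{of-and-on} and used as a black box in Section~\ref{sec:Core}. There is therefore no proof in the paper to compare your proposal against.

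That said, your outline follows the architecture of the original David--Semmes argument (corona decomposition for the forward direction, stopping-time construction yielding BPLG for the reverse), so in spirit you are reproducing the source proof rather than offering an alternative. The sketch is plausible at the level of a roadmap, but the passages you flag as ``bookkeeping'' --- Carleson packing of the angle and drift stopping cubes, and the verification that the bilateral lower bound forces the constructed graph to capture a definite $\cH^n$-proportion of $\G\cap B_R$ --- are precisely where the substance lies in \cite{of-and-on}, and your proposal does not supply those estimates. For the purposes of this paper none of that is needed: the theorem is invoked, not reproved.
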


Finally we recall a useful corollary from \cite{Az}.

\begin{lemma}\cite[Corollary 3.4]{Az}
Let $\mu$ be a Borel measure, $\Sigma=\supp \mu$ and $E\subseteq \Delta_{0}\in \cD(\Sigma)$ be a Borel set. Let also $0<\delta<1<M<M_{0}/2$ and set
\[\cP_{M,\delta}=\{\Delta: \Delta\cap E\neq\emptyset, \exists \; x\in MB_{\Delta} \mbox{ such that }\dist(x,E)\geq \delta\ell(\Delta)\}.\]
Then there is $C_{1}>0$ so that, for all $\Delta'\subseteq \Delta_{0}$ in $\cD(\Sigma)$,
\begin{equation}
\sum_{\Delta\subseteq \Delta' \atop \Delta\in \cP_{M,\delta}} \mu(\Delta)\leq C_{1} \mu(\Delta').
\label{e:sumP}
\end{equation}
\label{c:porous}
\end{lemma}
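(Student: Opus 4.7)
The statement is a Carleson packing estimate of porosity type. My plan is a stopping-time argument in which the porosity condition furnishes a quantitative mass loss at each tree level.

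For each $\Delta\in\cP_{M,\delta}$ with $\Delta\subseteq\Delta'$, I would fix a witness $y_\Delta\in MB_\Delta\cap\Sigma$ with $\dist(y_\Delta,E)\geq\delta\ell(\Delta)$. Choose an integer $N=N(M,\delta,c_0)$ so that $5c_0^N<\delta/4$, and let $\phi(\Delta)\in\cD_{\mathrm{gen}(\Delta)+N}$ be the unique cube containing $y_\Delta$. Then $\phi(\Delta)\subseteq B(y_\Delta,\delta\ell(\Delta)/2)$, so $\phi(\Delta)\cap E=\emptyset$ and in particular $\phi(\Delta)\notin\cP_{M,\delta}$; moreover $\phi(\Delta)\subseteq (M+1)B_\Delta$, so it remains geometrically close to $\Delta$.

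Next I would build a stopping-time tree rooted at $\Delta'$, declaring the children of a node $\Delta^*$ to be the cubes $\Delta\in\cP_{M,\delta}$ at depth exactly $N$ below $\Delta^*$. Cubes in $T:=\cP_{M,\delta}\cap\{\Delta\subseteq\Delta'\}$ at intermediate depths contribute at most a factor $N$ to the packing constant, since the cubes at each fixed generation are pairwise disjoint. The crux of the argument is then a uniform per-level mass loss
\[
\sum_{\Delta\in\mathrm{children}(\Delta^*)}\mu(\Delta)\;\leq\;(1-\eta)\,\mu(\Delta^*),
\]
for some $\eta=\eta(M,\delta,c_0)>0$ independent of the node; iterating along the tree gives a geometric series and yields \eqref{e:sumP} with $C_1=CN/\eta$.

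The heart of the matter, and the main obstacle, is this per-level mass loss. The idea is that the $N$-depth descendants of $\Delta^*$ partition $\Delta^*$, and the descendant meeting the witness $y_{\Delta^*}$ is disjoint from $E$, hence not in $\cP_{M,\delta}$, and is consequently never charged by any deeper level of the tree. Turning this into a quantitative bound requires a uniform estimate $\mu(\phi(\Delta^*))\geq\eta\,\mu(\Delta^*)$, which I would extract from the inclusion $c_1 B_{\phi(\Delta^*)}\subseteq\phi(\Delta^*)$ provided by Theorem \ref{th:dyadic}, the fact that $y_{\Delta^*}\in\Sigma=\supp\mu$, and the geometric doubling of the ambient metric inherent in the Christ--Hyt\"onen--Martikainen construction. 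Should $y_{\Delta^*}$ fall outside $\Delta^*$ (necessarily in $(M+1)B_{\Delta^*}\setminus\Delta^*$), I would enlarge the tree to admit as children also the cubes of $\cP_{M,\delta}$ at depth $N$ sitting inside a bounded dilate of $\Delta^*$, and absorb the overcounting into a multiplicative constant depending only on $M$.
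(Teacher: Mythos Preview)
The paper does not prove this lemma; it is cited from \cite{Az}. Your stopping-time strategy is the natural one, but there is a genuine gap at the decisive step.

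You claim a uniform per-level mass loss $\mu(\phi(\Delta^*))\geq\eta\,\mu(\Delta^*)$ and propose to derive it from $y_{\Delta^*}\in\Sigma=\supp\mu$ together with ``the geometric doubling of the ambient metric inherent in the Christ--Hyt\"onen--Martikainen construction.'' This conflates two distinct notions. Geometric doubling is a property of the \emph{metric space} $\Sigma$ (every ball is covered by boundedly many balls of half the radius) and is precisely what the cube construction requires; it says nothing whatsoever about the \emph{measure} $\mu$. Comparing $\mu(\phi(\Delta^*))$ with $\mu(\Delta^*)$ requires $\mu$ itself to be doubling, and that hypothesis is absent from the statement as written.

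Without it the conclusion is in fact false: take $\Sigma=[0,1]$, $\mu=\delta_0+\cH^1|_{[0,1]}$, $E=\{0\}$, and $\Delta'=\Delta_0=[0,1]$. Every cube containing $0$ belongs to $\cP_{M,\delta}$ and carries $\mu$-mass at least $1$, so the left side of \eqref{e:sumP} diverges while $\mu(\Delta')=2$. The lemma is being quoted somewhat loosely here; in \cite{Az} the measure in play is doubling, and in the only application in this paper (Section~\ref{sec:Core}) one has $\mu=\cH^d|_\Gamma$ for an Ahlfors-David regular set $\Gamma$, which is certainly doubling. Under that extra hypothesis your argument does go through: doubling gives $\mu(c_1 B_{\phi(\Delta^*)})\gtrsim_{M,N}\mu(B_{\Delta^*})\geq\mu(\Delta^*)$, and the remainder of your tree argument, including the enlargement needed when $y_{\Delta^*}\notin\Delta^*$, can then be carried out.
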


\section{Whitney cubes and interior sub-domains}

For $n\in\bZ$, a {\it $(d+1)$-dimensional dyadic cube $Q$ of side length $2^{n}$} in $\bR^{d+1}$ is a $(d+1)$-fold Cartesian product of closed intervals of the form $[i2^{n},(i+1)2^{n}]$, where $i\in\bZ$.  We will denote by $\ell(Q)=2^{n}$ the side-length of $Q$ and by $\lambda Q$ the cube of the same center as $Q$ and edges parallel to the coordinate axes but side-length $\lambda\ell(Q)$.

\begin{definition}[Whitney Cubes]
For an open set $\Omega\subseteq\bR^{d+1}$ we will denote by $\cW(\Omega)$ the set of maximal dyadic cubes $Q\subseteq \Omega$ such that $3 Q\cap \Omega^{c}=\emptyset$. These cubes have disjoint interiors and satisfy the following properties:
\begin{enumerate}
\item $\ell(Q)\leq \dist(x,\Omega^{c})\leq 4\diam Q$ for all $x\in Q$,
\item $(1-\sqrt{d+1}\frac{\lambda-1}{2})\ell(Q)\leq \dist(x,\Omega^{c})\leq  (4+(\lambda-1)/2)\diam Q$ for all $x\in \lambda Q$ if $\lambda\geq 1$ is close enough to $1$ (depending on $d$)
\item If $Q,R\in \cW(\Omega)$ and $Q \cap R \neq \emptyset$, then $\ell(Q)\sim_{d}\ell(R)$.
\item $\sum_{Q\in \cW(\Omega}\one_{2\lambda Q}\lec_{d}\one_{\Omega}$ for sufficiently small  $\lambda> 1$ (depending on $d$).
\end{enumerate}
\label{d:Whitney}
\end{definition}

\begin{itemize}
\item We say that $Q,R\in \cW(\Omega)$ are {\it adjacent} if $Q\cap R\neq\emptyset$ and we write $Q\sim R$. 
\item We denote by $P_{Q,R}$ the shortest path $Q=Q_{0},...,Q_{k}=R$ of Whitney cubes such that $ Q_{j}\sim Q_{j+1}$ for $j=0,...,k-1$ and define the distance $d_{\Omega}(Q,R)=k+1$. 
\end{itemize}

We can now state an equivalent characterization of uniformity .

\begin{theorem}[Alternate characterization of uniform domains] A domain $\Omega$ is uniform if and only if it satisfies the interior Corkscrew condition and there is $N_{\Omega}:[0,\infty)\to [0,\infty)$ increasing such that,
\begin{equation}
d_{\Omega}(Q,R)\leq N(\dist(Q,R)/\min\{\ell(Q),\ell(R)\}) \mbox{ for all }Q,R\in \cW(\Omega).
\label{e:d<N}
\end{equation}
\label{t:AHMNT}
\end{theorem}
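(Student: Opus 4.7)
The plan is to prove both implications of this characterization, following the approach of \cite{AHMNT}. The key bridge between the two conditions is the Whitney cube $S_z$ containing a point $z \in \Omega$, whose side-length satisfies $\ell(S_z) \sim \dist(z, \partial \Omega)$ by Definition \ref{d:Whitney}.

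For the forward direction, the interior Corkscrew condition follows easily: given $x \in \partial \Omega$ and $r \in (0, \diam \partial \Omega)$, pick $x' \in \partial \Omega$ with $|x-x'|>r$, connect $x$ and $x'$ by a good curve $\gamma$, and take the point $z \in \gamma$ with $|z-x|=r/2$; property (b) of uniformity forces $\dist(z, \partial \Omega) \geq r/(2C_\Omega)$, giving the Corkscrew ball. For \eqref{e:d<N}, I would connect the centers $x_Q, x_R$ of $Q, R \in \cW(\Omega)$ by a good curve $\gamma$ of length $\lesssim \dist(Q,R)+\ell(Q)+\ell(R)$. For every $z \in \gamma$, property (b) forces $\ell(S_z) \gtrsim \dist(z, \{x_Q, x_R\})/C_\Omega$, so the Whitney cubes along $\gamma$ of side $\sim 2^j$ must have centers inside $B(x_Q, C\cdot 2^j) \cup B(x_R, C\cdot 2^j)$; by the bounded overlap of Whitney cubes, there are $O(1)$ such cubes per dyadic scale. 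Summing over the $O(\log(1+\dist(Q,R)/\min(\ell(Q),\ell(R))))$ relevant scales produces the required increasing function $N_\Omega$ (in fact, logarithmic growth suffices).

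For the backward direction, fix $x, y \in \cnj{\Omega}$ and set $r = |x-y|$. Using the interior Corkscrew condition at dyadic scales $2^{-k}r$ near each boundary endpoint (and trivially if an endpoint already lies inside $\Omega$), extract sequences $\{x_k\} \to x$ and $\{y_k\} \to y$ with $|x_k - x| \leq 2^{-k}r$ and $\dist(x_k, \partial \Omega) \gtrsim 2^{-k}r$, and analogously for $y_k$. Consecutive Corkscrews $x_k, x_{k+1}$ lie in Whitney cubes of side $\sim 2^{-k}r$ at mutual distance $\lesssim 2^{-k}r$; by \eqref{e:d<N} they are joined by a Whitney chain of $\leq N_\Omega(O(1))$ cubes, each of side $\sim 2^{-k}r$. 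Similarly $Q_{x_0}$ and $Q_{y_0}$ are joined by a chain of $N_\Omega(O(1))$ cubes of side $\sim r$. Parametrize a polygonal curve $\gamma$ through the centers and common faces of these cubes in order. Its length is $\lesssim r \sum_{k \geq 0} 2^{-k} \lesssim r$, yielding property (a). For property (b), any point $z$ on the $k$-th refinement chain near $x$ lies in a Whitney cube of side $\sim 2^{-k}r$, hence $\dist(z, \partial \Omega) \sim 2^{-k}r$ while $\dist(z, x) \lesssim 2^{-k}r$; the main chain and the $y$-side refinements are handled identically.

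The principal obstacle is the scale-by-scale counting in the forward direction: property (b) is precisely what converts the linear length bound for $\gamma$ into a geometric packing bound depending only on the ratio $\dist(Q,R)/\min(\ell(Q),\ell(R))$, rather than on the absolute scale of $\Omega$; without it, $\gamma$ could wander arbitrarily close to $\partial \Omega$ far from its endpoints and produce an uncontrolled chain. A secondary difficulty in the backward direction is the proper choice of terminating scale for the Corkscrew sequences when $x$ or $y$ already lies in $\Omega$, and the bookkeeping needed to stitch infinitely many finite Whitney chains into a single rectifiable curve satisfying (b) uniformly.
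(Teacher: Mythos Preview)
The paper does not supply a proof of this theorem: it is stated as a known characterization (the label \texttt{t:AHMNT} points to the reference \cite{AHMNT}) and the text moves on immediately to Lemma~\ref{l:omegaE-}. There is therefore no in-paper argument to compare your proposal against.

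That said, your sketch is essentially the standard proof and is correct in outline. Two small points worth tightening. In the forward Corkscrew argument, from $r<\diam\partial\Omega$ you only get some $x'\in\partial\Omega$ with $|x-x'|>r/2$, not $>r$; this is harmless (it just changes the Corkscrew constant), but you should say so. More substantively, in the chain estimate you should record explicitly that for Whitney cubes one always has $\max\{\ell(Q),\ell(R)\}\lesssim \dist(Q,R)+\min\{\ell(Q),\ell(R)\}$ (since $\ell(Q)\sim\dist(Q,\partial\Omega)\le \dist(Q,R)+\dist(R,\partial\Omega)+\diam R$), so that the length of the good curve, and hence the largest Whitney scale encountered along it, is $\lesssim \dist(Q,R)+\min\{\ell(Q),\ell(R)\}$. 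Without this observation your scale count would depend on $\max\{\ell(Q),\ell(R)\}/\min\{\ell(Q),\ell(R)\}$ rather than on $\dist(Q,R)/\min\{\ell(Q),\ell(R)\}$ alone, and \eqref{e:d<N} would not follow as stated. The backward direction is fine; the only bookkeeping is to truncate the Corkscrew sequence at the scale $\sim\dist(x,\partial\Omega)$ when $x\in\Omega$, which you already flag.
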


We state here a method to construct a uniform sub-domain $\Omega^-$ around a prescribed portion of a uniform domain $\Omega$. This construction is pretty standard but a proof can be found for example in \cite[Lemma 4.1]{Az}.

\begin{lemma}
Let $\Omega\subseteq\bR^{d+1}$ be a uniform domain and let $E\subseteq B(x_{0},r_{0})\cap \d\Omega$ be compact where $x_{0}\in \d\Omega$ and $r_{0}\in(0,\diam\d\Omega)$. Set $C_{0}>0$ and 
\[\cC_{E}^{-}=\{Q\in \cW(\Omega): C_{0}Q\cap E\neq\emptyset, \ell(Q)\leq  r_{0}\}.\]
For some constant $\tilde{C}>0$, set
\[\widetilde{\cC_{E}}^{-}=\{Q:Q\in P_{Q_{1},Q_{2}}\mbox{ for some }Q_{1},Q_{2}\in \cC_{E}^{-} \mbox{ with } d_{\Omega}(Q_{1},Q_{2})\leq \tilde{C}\}.\]
For $\lambda>1$, set
\[\Omega_{E}^{-}=\ps{\bigcup_{Q\in \widetilde{\cC_{E}}^{-}}\lambda Q}^{\circ}.\]
Then for $C_{0}$ and $\tilde{C}$ large enough and $\lambda>1$ close enough to $1$ (each depending only on $C_\Omega$ and $d$), $\Omega_{E}^{-}$ is a uniform domain contained in $B(x_{0},C^{-}r_{0})$ and $\diam \d\Omega_{E}^{-}\geq r_{0}/C^{-}$, for some $C^{-}:=C_{\Omega_{E}^{-}}=C_{\Omega_{E}^{-}}(d,C_{0},\lambda,C_\Omega)$. Moreover, $\d\Omega_{E}^{-}\cap \d\Omega= \cnj{E}$.
\label{l:omegaE-}
\end{lemma}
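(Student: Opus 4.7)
The plan is to verify each conclusion in turn, reducing the uniformity of $\Omega_E^-$ to the criterion of Theorem \ref{t:AHMNT}, and repeatedly exploiting that $\widetilde{\cC_E}^-$ inherits its chaining structure from that of $\Omega$.

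First, $\Omega_E^-$ is open by construction. For connectedness, note that for $\lambda>1$ sufficiently close to $1$ (depending on $d$) the dilates of adjacent Whitney cubes overlap, so it suffices to chain any two cubes of $\widetilde{\cC_E}^-$ through a sequence of adjacent cubes of $\widetilde{\cC_E}^-$. Given $Q_1,Q_2\in\cC_E^-$, pick $e_i\in C_0Q_i\cap E$, take interior corkscrew points $\xi_i\in\Omega$ at $e_i$ at scale $\sim\ell(Q_i)$, and join $\xi_1,\xi_2$ by a good curve $\gamma\subset\Omega$. Property (b) of uniformity forces every Whitney cube of $\Omega$ visited by $\gamma$ to have size and distance from $E$ controlled by $|\xi_1-\xi_2|$; for $C_0$ large enough these cubes all lie in $\cC_E^-$, and partitioning the traced Whitney path into subpaths of length $\leq\tilde C$ places every cube on it into $\widetilde{\cC_E}^-$. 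A hierarchical version of this argument using dyadic ancestor cubes links pairs $Q_1,Q_2\in\cC_E^-$ at very different scales.

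The containment $\Omega_E^-\subseteq B(x_0,C^-r_0)$ is immediate, since every $Q\in\cC_E^-$ has $\ell(Q)\leq r_0$ and meets $B(x_0,(1+C_0)r_0)$, and Whitney paths of length $\leq\tilde C$ enlarge the enclosing ball only by a factor depending on $\tilde C$ and $d$ because consecutive cubes have comparable sizes. The diameter bound $\diam\partial\Omega_E^-\geq r_0/C^-$ follows by exhibiting a cube in $\cC_E^-$ of side $\sim r_0$, produced by the interior corkscrew of $\Omega$ at a point of $E$ at scale $r_0$. For the boundary identification, every $e\in E$ is accumulated by arbitrarily small cubes $Q\in\cC_E^-$ (those with $e\in C_0Q$), so $\cnj E\subseteq\partial\Omega_E^-\cap\partial\Omega$; conversely, if $y\in\partial\Omega_E^-\cap\partial\Omega$ is a limit of points $y_n\in\lambda Q_n$ with $Q_n\in\widetilde{\cC_E}^-$, then $\ell(Q_n)\sim\dist(y_n,\partial\Omega)\to 0$, and $Q_n$ lies on a Whitney path of length $\leq\tilde C$ with endpoints in $\cC_E^-$, forcing $\dist(Q_n,E)\to 0$, hence $y\in\cnj E$.

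The main obstacle is the uniformity of $\Omega_E^-$, which I intend to verify through Theorem \ref{t:AHMNT}. The interior corkscrew condition has two regimes: at $x\in\cnj E\subseteq\partial\Omega_E^-$, an interior corkscrew of $\Omega$ at $x$ and scale $r$ sits in a Whitney cube of $\Omega$ of size $\sim r$ which belongs to $\cC_E^-$ (for $C_0$ large enough) and is therefore contained in $\Omega_E^-$; at a point $x$ lying on the face of some $\lambda Q$ with $Q\in\widetilde{\cC_E}^-$ but $x\notin\cnj E$, a fraction of $\lambda Q$ itself yields the required ball of radius $\sim\ell(Q)$. For the Whitney-distance bound, given $P,P'\in\cW(\Omega_E^-)$, associate $Q,Q'\in\widetilde{\cC_E}^-$ of comparable scale containing or neighboring them; the uniformity of $\Omega$ produces a Whitney path in $\Omega$ from $Q$ to $Q'$ of length bounded by $N_\Omega(\dist(Q,Q')/\min\{\ell(Q),\ell(Q')\})$, and the curve-following argument from the connectedness step ensures the intermediate cubes remain in $\widetilde{\cC_E}^-$, producing a chain in $\cW(\Omega_E^-)$ of the required length. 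This last step is the most sensitive to the choices of $C_0$, $\tilde C$, and $\lambda$, which must be calibrated jointly so that every good curve in $\Omega$ between nearby cubes of $\cC_E^-$ is "captured" inside the $\lambda$-dilation of $\widetilde{\cC_E}^-$.
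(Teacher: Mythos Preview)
The paper does not actually prove this lemma: it merely states that ``the construction is pretty standard but a proof can be found for example in \cite[Lemma 4.1]{Az}.'' So there is no in-paper proof to compare against, and your sketch is precisely the sort of argument one expects in that reference: verify openness and connectedness directly, read off the containment and diameter estimates from the Whitney geometry, identify the boundary via limits of shrinking Whitney cubes, and then establish uniformity via the equivalent characterization in Theorem~\ref{t:AHMNT}.

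Your outline is sound, but two places would need to be tightened before it becomes a proof. First, in the uniformity step you invoke Theorem~\ref{t:AHMNT} for $\Omega_E^-$, which requires controlling $d_{\Omega_E^-}(P,P')$ for $P,P'\in\cW(\Omega_E^-)$; these are Whitney cubes of the \emph{new} domain and are in general not elements of $\widetilde{\cC_E}^-\subset\cW(\Omega)$. You need an explicit comparison showing each $P\in\cW(\Omega_E^-)$ sits inside (or overlaps) some $Q\in\widetilde{\cC_E}^-$ with $\ell(P)\sim\ell(Q)$, and that adjacency in $\cW(\Omega_E^-)$ can be converted to bounded-length Whitney paths in $\widetilde{\cC_E}^-$. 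This is routine once said, but without it the final paragraph is circular. Second, the connectedness argument as written only handles $Q_1,Q_2\in\cC_E^-$ with comparable scales joined by a single good curve; the ``hierarchical version using dyadic ancestors'' you allude to is exactly what is needed to handle disparate scales (climb from $Q_1$ to a cube of scale $\sim r_0$ through a tower in $\cC_E^-$, then descend to $Q_2$), and it should be written out, since it is also what guarantees the Whitney path stays inside $\widetilde{\cC_E}^-$ rather than escaping through large cubes far from $E$.
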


\begin{remark}\label{rem:harnack}
Let
$$\d{\cC_{E}^{-}}=\{Q\in \cC_{E}^{-}: Q\sim Q'\mbox{ for some } Q'\in \cW(\Omega)\backslash \cC_{E}^{-}\} $$
and 
$$\d\widetilde{\cC}_{E}^{-}=\{Q\in \widetilde{\cC_{E}}^{-}: Q\sim Q'\mbox{ for some } Q'\in \cW(\Omega)\backslash \widetilde{\cC_{E}}^{-}\}. $$
For each $R \in \d\widetilde\cC_{E}^{-}$ there exist at most $N=N(\tilde C, d)$ cubes $Q \in \d{\cC_{E}^{-}}$ with $Q'=R$.
\end{remark}

\section{Main lemmas}
Another characterization of rectifiability, which will be suitable for our purpose, is described in the following proposition.

\begin{proposition}\label{prop:rec-bilip}
$E\subseteq \bR^{d+1}$ is a {\it $n$-rectifiable} set if and only if  $\cH^n(E\backslash \bigcup_{i=1}^{\infty} \Gamma_{i})=0$ where $\Gamma_{i}=F_{i}(\bR^n)$ and $F_{i}:\bR^n\rightarrow \bR^{d+1}$ is bi-Lipschitz. 
\end{proposition}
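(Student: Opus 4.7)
The ``if'' direction is immediate, since every bi-Lipschitz map is Lipschitz. For the converse, suppose $\cH^n(E \setminus \bigcup_i \G_i) = 0$ with $\G_i = f_i(E_i)$ as in Definition \ref{def:rec-lip}. My strategy is to show that every Lipschitz image of a subset of $\bR^n$ inside $\bR^{d+1}$ is covered, up to an $\cH^n$-null set, by countably many bi-Lipschitz images of all of $\bR^n$; applied to each $\G_i$ this will finish the proof.

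First, I would extend each $f_i$ by McShane's theorem to a Lipschitz map $f_i : \bR^n \to \bR^{d+1}$ (without relabelling). Fix $f = f_i$. By Rademacher's theorem, $f$ is differentiable Lebesgue-a.e.\ on $\bR^n$. Split the differentiability points into $A = \{x : \mathrm{rank}\, Df(x) = n\}$ and $B = \{x : \mathrm{rank}\, Df(x) < n\}$. A standard covering argument using the area formula gives $\cH^n(f(B)) = 0$, so only $f(A)$ needs to be covered.

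Next, I would invoke a Lusin-type partition (as in, e.g., Chapter 15 of \cite{Mattila} or Federer's Theorem 3.2.29): modulo a further $\cH^n$-null set, $A$ decomposes into a countable union of closed pieces $K_j$ on each of which $Df$ is continuous, and moreover there is a rotation $R_j \in SO(d+1)$ such that $R_j \circ f|_{K_j}$ has the form $x \mapsto (\vp_j(x), \psi_j(x))$ with $\vp_j : K_j \to \bR^n$ bi-Lipschitz onto its image and $\psi_j : K_j \to \bR^{d+1-n}$ Lipschitz. The intuition is that near a full-rank differentiability point $x_0$, rotating so that the image of $Df(x_0)$ equals $\bR^n \times \{0\}$ exhibits $f$ locally as a Lipschitz graph over this plane; discretising the possible tangent planes and bi-Lipschitz constants then produces the countable family $\{K_j\}$.

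Given such a decomposition, I would set $h_j := \psi_j \circ \vp_j^{-1} : \vp_j(K_j) \to \bR^{d+1-n}$ (which is Lipschitz) and extend it component-wise by McShane to a Lipschitz map $H_j : \bR^n \to \bR^{d+1-n}$. The map $F_j : \bR^n \to \bR^{d+1}$ defined by $F_j(y) = R_j^{-1}(y, H_j(y))$ is then bi-Lipschitz: the upper Lipschitz bound is immediate, and the lower bound $\av{F_j(y) - F_j(y')} \geq \av{y - y'}$ follows from the graph structure together with $R_j^{-1}$ being an isometry. Since $f(K_j) \subseteq F_j(\bR^n)$, the collection $\{F_{i,j}\}_{i,j}$ covers $\bigcup_i \G_i$ up to $\cH^n$-null sets by bi-Lipschitz images of $\bR^n$, proving the proposition. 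The main obstacle is the Lusin-type partition producing the closed pieces $K_j$ with the required Lipschitz-graph structure; this step is classical in geometric measure theory and can be quoted essentially directly from \cite{Mattila}, while the remaining steps are routine.
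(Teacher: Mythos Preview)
Your argument is correct and follows the classical geometric-measure-theory route, but it is genuinely different from the paper's proof. The paper does not touch Rademacher, the area formula, or the Federer/Mattila linearisation into Lipschitz graphs at all; instead it invokes the quantitative ``Hard Sard'' extension theorem of Azzam--Schul (Theorem~\ref{t:hard-sard} in the paper). After extending each $f_i$ to a Lipschitz map on $\bR^n$ and tiling by unit cubes, that theorem directly furnishes, for each error level $\kappa=1/k$, finitely many sets $\Sigma_\ell$ on which $f_i$ agrees with a globally bi-Lipschitz map $F_\ell:\bR^n\to\bR^{d+1}$, with the image of the leftover having $\cH^n_\infty$-content at most $C'/k$; letting $k\to\infty$ and collecting pieces gives the countable bi-Lipschitz cover. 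Your approach has the advantage of being self-contained within standard GMT and in fact yields the slightly sharper conclusion that the $\Gamma_i$ can be taken to be rotated Lipschitz graphs; the paper's approach is shorter once one is willing to quote Hard Sard as a black box, and comes with quantitative control on the bi-Lipschitz constants and number of pieces (unnecessary here, but consistent with the quantitative flavour of the rest of the paper).
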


For the proof we need the following theorem.
\begin{theorem} \cite[Theorem II]{hardsard}.
Let $D\geq d\geq 1$ and  $0<\kappa<1$ be given. 
There are constants $C'=C'(d)>0$ and $M=M(\kappa,d)$ such that if
$f:\bR^{d}\rightarrow\bR^{D}$ is a $1$-Lipschitz function, then there are sets $\Sigma_{1},...,\Sigma_{M}$ such that 
\begin{equation}
\cH_{\infty}^{d}\ps{f\ps{[0,1]^{d}\backslash\bigcup_{i=1}^{M} \Sigma_{i}}}\leq C' \kappa
\label{e:inverse-theorem-1}
\end{equation}
and such that if $\Sigma_{i}\neq\emptyset$, there is $F_{i}:\bR^{d}\rightarrow\bR^D$ which is $L_{0}$-bi-Lipschitz, $L_{0}\sim_{D}\kappa^{-1}$, so that 
\begin{equation}
F_{i}|_{\Sigma_i}= f|_{\Sigma_{i}}.
\label{e:extends}
\end{equation}
\label{t:hard-sard}
\end{theorem}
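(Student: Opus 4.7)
The ($\Leftarrow$) direction is essentially trivial: a bi-Lipschitz map $F_i : \bR^n \to \bR^{d+1}$ is, in particular, Lipschitz, so one satisfies Definition \ref{def:rec-lip} with $E_i = \bR^n$ and $f_i = F_i$. All of the work lies in the ($\Rightarrow$) direction, where the strategy is to exhaust each Lipschitz image $f_i(E_i)$ by bi-Lipschitz images using Theorem \ref{t:hard-sard} and a geometric sequence of parameters $\kappa \to 0$.

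The plan for ($\Rightarrow$) is as follows. Suppose $\cH^n(E \setminus \bigcup_i f_i(E_i)) = 0$ with each $f_i : E_i \to \bR^{d+1}$ Lipschitz and $E_i \subseteq \bR^n$. Extend each $f_i$ to all of $\bR^n$ by a standard Lipschitz extension (e.g.\ McShane applied coordinatewise, or Kirszbraun), and rescale the domain and range so that the extension is $1$-Lipschitz; then cover $\bR^n$ by countably many unit cubes. By countable subadditivity it suffices to prove, for a single $1$-Lipschitz $f : \bR^n \to \bR^{d+1}$ and a single cube affinely identified with $[0,1]^n$, that $f([0,1]^n)$ is contained up to an $\cH^n$-null set in a countable union $\bigcup_{F \in \cF} F(\bR^n)$ of bi-Lipschitz images.

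To build $\cF$, apply Theorem \ref{t:hard-sard} (with $d$ replaced by $n$ and $D = d+1$) for $\kappa = \kappa_k := 2^{-k}$ for each $k \in \bN$, producing sets $\Sigma_{k,1}, \ldots, \Sigma_{k,M_k} \subseteq [0,1]^n$ and, for each non-empty $\Sigma_{k,i}$, a bi-Lipschitz map $F_{k,i} : \bR^n \to \bR^{d+1}$ agreeing with $f$ on $\Sigma_{k,i}$, with
\[
\cH^n_\infty \!\left( f\!\left( [0,1]^n \setminus \bigcup_{i=1}^{M_k} \Sigma_{k,i} \right) \right) \leq C' 2^{-k}.
\]
Take $\cF = \{ F_{k,i} : k \geq 1,\ 1 \leq i \leq M_k \}$ and set $N := f([0,1]^n) \setminus \bigcup_{F \in \cF} F(\bR^n)$.

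The main obstacle, and the content of the last step, is to upgrade the Hausdorff-content control provided by Theorem \ref{t:hard-sard} to a genuine Hausdorff-measure bound on $N$. The key observation is: if $x = f(y) \in N$, then for each fixed $k$, the point $y$ cannot lie in any $\Sigma_{k,i}$ (otherwise $x = F_{k,i}(y) \in F_{k,i}(\bR^n)$, contradicting $x \in N$). Hence $N \subseteq f\bigl([0,1]^n \setminus \bigcup_i \Sigma_{k,i}\bigr)$ for every $k$, so $\cH^n_\infty(N) \leq C' 2^{-k}$ for all $k$, giving $\cH^n_\infty(N) = 0$. Since $\cH^n_\infty$-null and $\cH^n$-null sets coincide (a standard consequence of the definition of Hausdorff measure, obtained by rescaling an $\cH^n_\infty$-efficient cover so all radii shrink), we conclude $\cH^n(N) = 0$, which completes the proof.
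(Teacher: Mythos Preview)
Your proposal is not a proof of Theorem~\ref{t:hard-sard} at all: that theorem is quoted from \cite{hardsard} and carries no proof in this paper, and your argument invokes it as a black box rather than establishing it. What you have actually written is a proof of Proposition~\ref{prop:rec-bilip}, the characterization of $n$-rectifiability via bi-Lipschitz images.

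Read as a proof of Proposition~\ref{prop:rec-bilip}, your argument is correct and follows the same route as the paper's: extend each $f_i$ to a Lipschitz map on all of $\bR^n$, tile $\bR^n$ by unit cubes, and on each cube apply Theorem~\ref{t:hard-sard} for a sequence of parameters $\kappa\to 0$ (the paper uses $\kappa=1/k$, you use $2^{-k}$; this is immaterial). Your write-up is in fact more careful than the paper's at the final step: the paper simply asserts that the resulting collection of bi-Lipschitz maps works, whereas you spell out why the residual set $N$ is $\cH^n$-null by observing that $N\subseteq f\bigl([0,1]^n\setminus\bigcup_i\Sigma_{k,i}\bigr)$ for every $k$, forcing $\cH^n_\infty(N)=0$ and hence $\cH^n(N)=0$. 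That step is correct and fills a detail the paper leaves implicit.
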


\begin{proof}[Proof of Proposition \ref{prop:rec-bilip}] 
The sufficiency part is straightforward. For the necessity part, we let $E \subset \bR^{d+1}$ be a $n$-rectifiable set. Then, by definition \ref{def:rec-lip}, there exist $\Gamma_{i}=f_{i}(E_{i})$, where $E_{i}\subseteq \bR^{n}$ and $f_{i}:E_{i}\rightarrow \bR^{d+1}$ Lipschitz, such that $\cH^n(E\backslash \bigcup_{i=1}^{\infty} \Gamma_{i})=0$. We extend $f_i$ to Lipschitz functions$\tilde f_i: \bR^n \rightarrow \bR^{d+1}$ and then we cover $ \bR^n$ by $n$-dimensional cubes $\{Q_j\}_{j=1}^\infty$ of unit length. 

Fix such a cube $Q_j$ and then fix a Lipschitz extension $\tilde f_i$ restricted to $Q_j$. If $k \in \bN$ and $\delta=1/k$, by Theorem \ref{t:hard-sard}, we find $M=M(k, n)$ sets $\Sigma^{i,j}_{1},...,\Sigma^{i,j}_{M}$ such that $\cH_{\infty}^{n}\ps{\tilde f_i \ps{Q_j\backslash\bigcup_{\ell=1}^{M} \Sigma^{i,j}_{\ell}}}\leq C' k^{-1}$. Additionally, there are $F^{i,j}_{\ell}:\bR^{n}\rightarrow\bR^{d+1}$ which are $L_{0}$-bi-Lipschitz, with $L_{0}\sim_{d} k$, so that $F^{i,j}_{\ell}|_{\Sigma^{i,j}_\ell}= \tilde f_i|_{\Sigma^{i,j}_{\ell}}$. 

If we apply this to each $\tilde f_i$ and each cube $Q_j$, it is easy to see that $\{F^{i,j}_{\ell}\}_{i,j,\ell}$ is our collection of bi-Lipschitz maps. 

\end{proof}

\begin{lemma}\label{lem:BP-E'}
Let $\G  \subset \bR^{d+1}$ be a closed set. Suppose that $\Delta_0 \in \cD(\G)$ and a Borel set $E \subset \Delta_0$ so that $0<\cH^d(E)<\infty$. Then there exist $C_0>1$ and a Borel set $E' \subset E$ such that
\begin{enumerate}
\item  $\cH^d(E') \geq \frac{1}{2}\cH^d(E)$,
\item $\cH^d(E \cap \Delta) \geq C_0^{-1} \cH^d(\Delta)$, for every $\Delta \in \cD(\G)$ for which $\Delta \subset \Delta_0$ and $\Delta \cap E' \neq \emptyset$.
\end{enumerate}
\end{lemma}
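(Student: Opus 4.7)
The plan is a standard stopping-time / ``big pieces'' construction. I will define $E' := E \setminus B$ where
\[
B := \{ x \in E : \exists \, \Delta \in \cD(\Gamma),\ x \in \Delta \subsetneq \Delta_0,\ \cH^d(E \cap \Delta) < C_0^{-1}\cH^d(\Delta)\},
\]
with the constant $C_0$ to be fixed at the end depending on the ratio $\cH^d(\Delta_0)/\cH^d(E)$. Since $\cD(\Gamma)$ is countable and each $\Delta \in \cD(\Gamma)$ is Borel, $B$, and hence $E'$, is Borel. (The choice of $C_0$ implicitly uses $\cH^d(\Delta_0) < \infty$, which is available in the intended applications from the local finiteness of $\cH^d|_{\d \Omega}$ hypothesized in Theorem \ref{th:main}.)

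To estimate $\cH^d(B)$ I extract a disjoint cover of $B$ by maximal bad cubes. For each $x \in B$, the cubes of $\cD(\Gamma)$ containing $x$ and strictly contained in $\Delta_0$ form a chain totally ordered by inclusion (Theorem \ref{t:Christ}(2)), bounded above by the child of $\Delta_0$ through $x$; the bad cubes in this chain therefore admit a maximum $\Delta(x)$. Two such maxima $\Delta(x), \Delta(y)$ must either coincide or be disjoint, since otherwise nesting would contradict the maximality of one of them. The distinct maxima $\{\Delta_j\}$ thus form a pairwise disjoint family of bad cubes with $B \subseteq \bigcup_j \Delta_j \subseteq \Delta_0$. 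Disjointness now gives
\[
\cH^d(B) \leq \sum_j \cH^d(E \cap \Delta_j) < C_0^{-1} \sum_j \cH^d(\Delta_j) \leq C_0^{-1} \cH^d(\Delta_0),
\]
and taking $C_0 := 2\max\{\cH^d(\Delta_0)/\cH^d(E),\,1\}$ forces $\cH^d(B) \leq \tfrac{1}{2}\cH^d(E)$, which is (1). For (2), fix $\Delta \in \cD(\Gamma)$ with $\Delta \subseteq \Delta_0$ and $\Delta \cap E' \neq \emptyset$: if $\Delta = \Delta_0$ then $\cH^d(E \cap \Delta_0) = \cH^d(E) \geq C_0^{-1} \cH^d(\Delta_0)$ by the choice of $C_0$; if $\Delta \subsetneq \Delta_0$ then any $x \in \Delta \cap E'$ lies outside $B$, which by the very definition of $B$ forces $\cH^d(E \cap \Delta) \geq C_0^{-1} \cH^d(\Delta)$.

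The only real obstacle is the top-cube case of (2): it is precisely what prevents $C_0$ from being a universal constant and forces it to depend on $\cH^d(\Delta_0)/\cH^d(E)$. Apart from that bookkeeping, everything is routine from the dyadic nesting structure supplied by Theorem \ref{t:Christ}.
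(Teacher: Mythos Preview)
Your proof is correct and is essentially the same stopping-time argument as the paper's: both remove from $E$ the union of the maximal ``bad'' cubes $\Delta \subset \Delta_0$ (those with $\cH^d(E\cap\Delta)$ below the threshold $\delta\,\cH^d(\Delta)$), and both take the threshold $\delta = C_0^{-1} = \cH^d(E)/(2\cH^d(\Delta_0))$. Your write-up is a bit more explicit than the paper's in verifying (2) and in flagging the implicit hypothesis $\cH^d(\Delta_0)<\infty$, but there is no substantive difference.
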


\begin{proof}
Let $\{\D_i\}_{i \in I}$ be the maximal sub-collection of metric cubes in $\cD(\G)$ such that $\D_i \cap E \neq \emptyset$, $\D_i \subset \D_0$ and 
$$\cH^d(\D_i \cap E)\leq \delta \cH^d(\D_i),$$
for some $\delta>0$ to be chosen. Define $E':=E \backslash \bigcup_{i \in I} \D_i$ and note that
\begin{align*}
\cH^d(E') =&\cH^d(E) - \sum_{i \in I} \cH^d(\D_i \cap E) \\
\geq &\cH^d(E) - \delta \sum_{i \in I} \cH^d(\D_i) \\
\geq &\cH^d(E) \Big(1- \delta \frac{\cH^d(\D_0)}{\cH^d(E)}\Big).
\end{align*}
We conclude by choosing $\delta =\cH^d(E)/2\cH^d(\D_0)$.
\end{proof}

\begin{lemma}\label{lem:BP-Bad}
Let $\G \subset \bR^{d+1}$ be an Ahlfors-David $d$-regular closed set, $\Delta_0 \in \cD(\G)$ and $E$ be a Borel subset of $\Delta_0$ so that $0<\cH^d(E)<\infty$. Suppose that $E'$ is the subset of $E$ obtained by Lemma \ref{lem:BP-E'} and $\cB \subset \cD(\G)$ is a sub-collection of metric cubes such that for each $\Delta \in \cD(\G)$ we have that  
$$\sum_{\Delta'  \in \cB: \Delta' \subset \Delta} \cH^d(\Delta') \lesssim \cH^d(\Delta).$$
Then for every $\Delta \subset \Delta_0$ for which $\Delta \cap E' \neq \emptyset$, there exists $\Delta' \subset \Delta$ such that $\Delta' \in \cD(\G) \backslash \cB$ and $\ell(\Delta') \sim \ell(\Delta)$.
\end{lemma}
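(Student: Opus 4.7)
The plan is to argue by contradiction through a telescoping/pigeonhole argument against the Carleson packing bound on $\cB$. Fix $\Delta \subseteq \Delta_0$ with $\Delta \cap E' \neq \emptyset$, write $\Delta \in \cD_n(\G)$, and suppose toward a contradiction that every descendant $\Delta' \in \cD(\G)$ with $\Delta' \subseteq \Delta$ and $\ell(\Delta') \geq c_0^{K}\,\ell(\Delta)$ belongs to $\cB$, where $K \in \bN$ will be fixed momentarily.

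The key observation is that, for each $j \in \{0,1,\dots,K\}$, the sub-family $\{\Delta' \in \cD_{n+j}(\G) : \Delta' \subseteq \Delta\}$ is a partition of $\Delta$. Indeed, property (1) of Theorem \ref{th:dyadic} tells us that $\cD_{n+j}$ covers $\G$, while properties (2) and (3) together force two cubes of the same generation to coincide or be disjoint: they share the common size parameter $5c_0^{n+j}$, so any nesting inclusion must be an equality. Hence
\begin{equation*}
\sum_{\substack{\Delta' \in \cD_{n+j}(\G) \\ \Delta' \subseteq \Delta}} \cH^d(\Delta') \;=\; \cH^d(\Delta).
\end{equation*}
Summing this identity over $j = 0, 1, \ldots, K$ and using our contradiction hypothesis that each such $\Delta'$ lies in $\cB$, we obtain
\begin{equation*}
\sum_{\substack{\Delta' \in \cB \\ \Delta' \subseteq \Delta}} \cH^d(\Delta') \;\geq\; (K+1)\,\cH^d(\Delta).
\end{equation*}

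On the other hand, the Carleson packing hypothesis provides a constant $C_p$, independent of $\Delta$, bounding the left-hand side by $C_p\,\cH^d(\Delta)$. Since $\G$ is Ahlfors-David $d$-regular we have $\cH^d(\Delta) \sim \ell(\Delta)^d > 0$, so the two estimates force $K + 1 \leq C_p$. Choosing any integer $K > C_p$ at the outset yields the desired contradiction; consequently there exists $\Delta' \in \cD(\G)\setminus \cB$ with $\Delta' \subseteq \Delta$ and $\ell(\Delta') \geq c_0^K\,\ell(\Delta)$, and because $K$ depends only on $C_p$ and $c_0$ we conclude $\ell(\Delta') \sim \ell(\Delta)$.

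\emph{Main obstacle.} The argument is essentially a pigeonhole, so the only point requiring care is verifying that cubes of a fixed generation $\cD_{n+j}$ genuinely partition $\Delta$; this is standard for the Hyt\"onen--Martikainen system and drops out of Theorem \ref{th:dyadic}. The hypothesis $\Delta \cap E' \neq \emptyset$ plays no active role above; it is presumably retained so that, via Lemma \ref{lem:BP-E'}, one also has $\cH^d(\Delta \cap E) \gtrsim \cH^d(\Delta)$. Should downstream applications require the good $\Delta'$ to additionally meet $E$ (or $E'$), one restricts the inner sum to cubes intersecting $E$ and reruns the same pigeonhole, using Lemma \ref{lem:BP-E'} in place of raw AD-regularity to keep the lower bound $\gtrsim \cH^d(\Delta)$.
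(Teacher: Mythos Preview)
Your proof is correct and in fact simpler than the paper's. The paper argues through the restricted families
\[
\cM_k=\{\Delta'\subset\Delta:\ \ell(\Delta)/\ell(\Delta')\sim 2^{k},\ \Delta'\cap E\neq\emptyset\},
\]
invoking $\Delta\cap E'\neq\emptyset$ together with Lemma~\ref{lem:BP-E'} to get $\cH^d(\Delta\cap E)\gtrsim \cH^d(\Delta)$, hence $|\cM_k|\gtrsim 2^{kd}$, and then runs the same pigeonhole against the Carleson packing bound. Your version instead sums over \emph{all} descendants of $\Delta$ generation by generation, so that each layer contributes exactly $\cH^d(\Delta)$ by the partition property of $\cD_{n+j}$; this bypasses Lemma~\ref{lem:BP-E'} entirely and shows, as you correctly observe, that the hypothesis $\Delta\cap E'\neq\emptyset$ is not actually needed for the statement as written. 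The paper's route does buy the additional conclusion that the good cube $\Delta'$ meets $E$, but that is neither asserted in the lemma nor used in its application in Section~\ref{sec:Core}.
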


\begin{proof}
We let $\D \in \cD$ such that $\D \cap E' \neq \emptyset$ and $\Delta \subset \D_0$. Define now
$$ \cM_k=\{ \D' \in \cD: \D' \subset \D, \ell(\D)/\ell(\D') \sim 2^{k} \,\, \text{and}\,\,\D' \cap E \neq \emptyset \}.$$
By Lemma \ref{lem:BP-E'}, we obtain that
\begin{align*}
\cH^d(\D) &\lesssim \cH^d(\D \cap E) \leq \sum_{\D' \in \cM_k} \cH^d(\D' \cap E)\lesssim2^{-kd} \cH^d(\D)  |\cM_k|,
\end{align*}
where $|\cM_k|$ stands for the cardinality of $\cM_k$. Therefore, $|\cM_k| \gec 2^{kd}$.

Take now all the  metric cubes $\D' \in \bigcup_{k=1}^N \cM_k$ and notice that in the case that $\bigcup_{k=1}^N \cM_k \subset \cB$ we have that
\begin{align*}
N \cH^d(\D) & \lesssim \cH^d(\D) \sum_{k=1}^N  2^{-kd} |\cM_k| \sim \sum_{k=1}^N \sum_{\D' \in \cM_k} \cH^d(\D')  \\
&\leq \sum_{\Delta'  \in \cB: \Delta' \subset \Delta} \cH^d(\Delta') \lesssim \cH^d(\Delta).
\end{align*}
If we choose $N>0$ sufficiently large, we reach a contradiction and the lemma follows.
\end{proof}

\
\section{Core of the proof of Theorem \ref{th:main}}\label{sec:Core}
Let $\Omega \subset \bR^{d+1}$ be as in Theorem \ref{th:main}. Since $\partial \Omega$ is $d$-rectifiable we can apply Proposition \ref{prop:rec-bilip} and find a countable union of bi-Lipschitz images that exhausts $\d \Omega$ up to a set of $\cH^d$-measure zero. We fix such an image $F_i(\bR^d)$ and denote it by $\Gamma$. Let $F:= \partial \Omega \cap \Gamma$. Then by Lebesgue's density theorem, for $\cH^d$-a.e. $x \in F$, it holds that

$$\lim_{r \to 0} \frac{\cH^d(B(x,r) \cap F)}{\cH^d( B(x,r) \cap \partial\Omega)} \to 1.$$
Therefore, for $\cH^d$-a.e. $x \in F$, there exists $r_x>0$ such that for every $0<r<r_x$, $\cH^d(B(x,r) \cap F) \geq \cH^d( B(x,r) \cap \partial\Omega)/2>0$. 

Fix now $x_0 \in F$ and $r_0<r_{x_0}$.

\begin{lemma}\label{lem:dense-flat}
Let $\Omega \subset \bR^{d+1}$ be a uniform domain and $E$ be a compact subset of $ B(x_0,r_0) \cap \partial \Omega \cap \Gamma$ such that $\cH^d(E)>0$. Let also $M=2C_\Omega+1$ and $\varepsilon>0$ be sufficiently small. Suppose that $\Delta \in \cD(\G)$ has the following properties:
\begin{itemize}
\item[1)] (flatness) $\bbeta_\G(\Delta) <\varepsilon$,
\item[2)]  (density) For every $x \in M B_\Delta \cap \G$ and $\dist(x, E) \leq \varepsilon \ell(\Delta)$.
\end{itemize}
 Then there exists a ball $B_0 \subset B_\Delta \backslash \cnj \Omega$ such that $r(B_0) \sim \ell(\D)$.
\end{lemma}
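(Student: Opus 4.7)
The plan is to show that near $\D$, the domain $\Omega$ cannot straddle both sides of the approximating plane $P$ far from $P$, and then to place a large ball on the $\Omega$-free side. Let $P$ be a $d$-plane with $\bbeta_\G(\D, P) < \ve$, let $H^+$ and $H^-$ denote the two open half-spaces bounded by $P$, and fix a constant $\delta_0 \in (0, 1)$ depending only on $C_\Omega$ (for concreteness $\delta_0 = 1/4$).

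First I would prove the key dichotomy: for all $\ve < \ve_0(C_\Omega, \delta_0)$, at least one of the sets
\[
A^\pm := \Omega \cap B_\D \cap H^\pm \cap \{x : \dist(x,P) \geq \delta_0 \ell(\D)\}
\]
is empty. Assume for contradiction that $y^+ \in A^+$ and $y^- \in A^-$. By uniformity of $\Omega$, pick a good curve $\gamma \subset \Omega$ from $y^+$ to $y^-$; its length satisfies $\ell(\gamma) \leq C_\Omega|y^+ - y^-| \leq 2C_\Omega \ell(\D)$. Since $y^\pm$ lie on opposite sides of $P$, continuity forces $\gamma$ to cross $P$ at some point $z$, and the triangle inequality gives $|z - x_\D| \leq \ell(\gamma) + |y^+ - x_\D| \leq (2C_\Omega + 1)\ell(\D) = M\ell(\D)$, so $z \in MB_\D \cap P$. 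By property (b) of uniformity, combined with $\dist(z, y^\pm) \geq \dist(y^\pm, P) \geq \delta_0\ell(\D)$,
\[
\dist(z,\partial\Omega) \geq \frac{\dist(z, \{y^+, y^-\})}{C_\Omega} \geq \frac{\delta_0 \ell(\D)}{C_\Omega}.
\]
On the other hand, the bilateral flatness (applied to $z \in P \cap MB_\D$) gives $z' \in \G$ with $|z - z'| \leq \ve\ell(\D)$, and the density hypothesis applied to $z'$ (which lies in $MB_\D \cap \G$ after a harmless shrinking of $M$ by $\ve$, or equivalently after restricting $y^\pm$ to a slightly smaller concentric ball) gives $z'' \in E \subset \partial\Omega$ with $|z' - z''| \leq \ve\ell(\D)$. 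Hence $\dist(z, \partial\Omega) \leq |z - z''| \leq 2\ve\ell(\D)$. Comparing the two estimates forces $\delta_0/C_\Omega \leq 2\ve$, contradicting the choice of $\ve$.

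Now assume without loss of generality that $A^+ = \emptyset$. Flatness applied to $x_\D \in \G \cap MB_\D$ gives $\dist(x_\D, P) \leq \ve\ell(\D)$, so the open set $B_\D \cap H^+ \cap \{x : \dist(x,P) > \delta_0\ell(\D)\}$ is a spherical cap of height approximately $(1 - \delta_0)\ell(\D)$, and a routine Euclidean calculation shows it contains a ball $B_0 = B(p, c\ell(\D))$ with $c = c(\delta_0) > 0$. By construction $B_0 \cap \Omega \subset A^+ = \emptyset$. Moreover $B_0 \cap \partial\Omega = \emptyset$ too, because every point of $\partial\Omega$ has $\Omega$-points arbitrarily nearby, and $B_0$ is an open neighborhood of any of its own points that is disjoint from $\Omega$. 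Thus $B_0 \subset B_\D \setminus \cnj{\Omega}$ with $r(B_0) \sim \ell(\D)$, as required.

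The main nuisance I anticipate is the bookkeeping in the flatness/density chain, specifically ensuring that the companion point $z' \in \G$ produced by bilateral $\beta$ is itself located in $MB_\D$ so that the density hypothesis applies literally; this is resolved by either a slight enlargement of $M$ by an $O(\ve)$ amount (invisible in the final $\ve_0$-smallness constant), or by restricting $y^\pm$ to the concentric ball of radius $(1-\ve/(2C_\Omega))\ell(\D)$ so that the crossing $z$ lies strictly inside $MB_\D$. Once this is set up, the argument itself is conceptually clean: uniformity propagates the inability of $\Omega$ to split around a nearly flat, dense piece of its own boundary.
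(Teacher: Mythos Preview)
Your proof is correct and follows essentially the same strategy as the paper: both arguments pick a good curve between points of $\Omega$ on opposite sides of the approximating plane, find a crossing point $z\in P\cap MB_\Delta$, and derive a contradiction by comparing the lower bound $\dist(z,\partial\Omega)\gtrsim\ell(\Delta)$ from the uniformity condition with the upper bound $\dist(z,\partial\Omega)\lesssim\varepsilon\ell(\Delta)$ from flatness plus density. The only cosmetic difference is that the paper first invokes the interior Corkscrew ball $\widetilde B\subset\Omega\cap B_\Delta$, shows $\tfrac12\widetilde B$ lies far from $P$, and then takes $B_0$ to be its reflection across $P$, whereas you phrase the same contradiction as a dichotomy (at most one of $A^\pm$ is nonempty) and place $B_0$ directly in the empty spherical cap; your packaging avoids the explicit Corkscrew step but is otherwise the same argument.
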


\begin{proof} 
Let $P$ be the hyperplane that infimizes $\bbeta_\G(\Delta)$ and $P'$ the hyperplane parallel to $P$ passing through $x_\D$ (the center of $B_\D$). Then $\bbeta_\G(\D, P') \leq 2 \ve$. Without loss of generality we assume $x_\D=0$ and $P'=\bR^d$. 

Let $\widetilde B$ be a Corkscrew ball in $\Omega$ for $B_\D$ with radius $r(\widetilde B) \sim \ell(\D)$. We claim that every $x \in 1/2\widetilde B$ satisfies $\dist(x, \bR^d) \gec \ell(\D)$. Indeed, if this was not the case, we would have that $\dist(1/2\widetilde B, \bR^d) \ll \ell(\D)$ and therefore, by the density and flatness condition for $\D$, $\widetilde B \cap \Omega \neq \emptyset$. But this violates that $\widetilde B$ is a Corkscrew ball in $\Omega$ and proves our claim.

Fix $x \in 1/2\widetilde B$ and let $y$ be in the reflection of $1/2\widetilde B$ across $\bR^d$. We will show that $y$ cannot lie in $\Omega$. We assume to the contrary that both $x$ and $y$ are in $\Omega$. Then, by the uniformity of $\Omega$, there exists a good curve $\gamma$ connecting $x$ and $y$ (notice that by the choice of $M$ it is always true that $\gamma \subset M B_\Delta$). Therefore, there exists  $z \in \bR^d \cap\,\gamma \cap \, M B_\Delta$. If $ z_\G \in \G$ is the point that realizes the distance $\dist(z, \Gamma)$, we have that $|z-z_\G| \leq 2 \ve \ell(\D)$, using that $\bbeta_\G(\D, \bR^d) < 2\ve$. This, in turn, by the density of $MB_\D \cap \G$ in $E$, implies that $d(z, E) \leq 3 \ve  \ell(\D)$. Using the ``goodness" of the curve $\gamma$ we obtain that 
\begin{equation} \label{eq:good-fd}
\dist(z,\{x,y\})/c\leq \dist(z, \partial \Omega) \leq \dist(z, E) \leq 3 \ve \ell(\D).
\end{equation}
But since $\dist(x, \bR^d) \sim \dist(y, \bR^d) \sim \ell(\D)$ and $\ve$ is sufficiently small, we reach a contradiction and this concludes the theorem.


\end{proof}

Since $\cH^d|_{\d\Omega}$ is Radon, we can always find $E\subset \partial \Omega \cap \Gamma \cap B(x_0,r_0)$ compact with $\cH^d(E)>0$. Let now $E' \subset E$ be the set obtained from Lemma \ref{lem:BP-E'} and construct a uniform domain $\Omega_{E'}^-$ around $E'$ as in Lemma \ref{l:omegaE-}. We will show that $\Omega_{E'}^-$  is an NTA domain. 

\begin{lemma}
$\Omega_{E'}^{-}$ satisfies the exterior Corkscrew condition. 
\end{lemma}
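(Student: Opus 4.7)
The plan is to split boundary points of $\Omega_{E'}^-$ into two kinds, using Lemma \ref{l:omegaE-}: those in $\Omega$, which are handled by the Whitney-cube structure, and those in $\cnj{E'}\subset\d\Omega$, which require the rectifiability input of Lemma \ref{lem:dense-flat}. The second kind is the genuinely new case, and is also the source of the main obstacle.

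For $x \in \d\Omega_{E'}^- \cap \Omega$ at scale $r$, such $x$ lies on $\d(\lambda Q)$ for some $Q \in \d \widetilde{\cC_{E'}}^-$, and by the very definition of $\d\widetilde{\cC_{E'}}^-$ there is an adjacent Whitney cube $Q' \in \cW(\Omega)\setminus \widetilde{\cC_{E'}}^-$. For $\lambda$ sufficiently close to $1$, property (4) of Definition \ref{d:Whitney} ensures that a ball concentric with $Q'$ of radius $\sim \ell(Q)$ avoids every $\lambda Q''$ with $Q''\in\widetilde{\cC_{E'}}^-$, and so lies in $\bR^{d+1}\setminus\cnj{\Omega_{E'}^-}$. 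When $r\lesssim \ell(Q)$ this ball immediately gives the required exterior Corkscrew, and when $r\gtrsim \ell(Q)$ one simply passes to a larger non-selected Whitney cube in the vicinity of $x$, available since $\Omega_{E'}^-\subset B(x_0,C^-r_0)$ with boundary of diameter $\gtrsim r_0/C^-$.

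For $x\in\cnj{E'}$ at scale $r$, I would fix $\Delta\in\cD(\Gamma)$ with $x\in B_\Delta$, $\Delta\cap E'\neq\emptyset$, and $\ell(\Delta)\sim r$. Because $\Gamma=F_i(\bR^d)$ is a bi-Lipschitz image of $\bR^d$, it is $d$-ADR and uniformly rectifiable, so Theorem \ref{th:BWGL} makes the non-flat family $\cB_1=\{\Delta : \bbeta_\Gamma(\Delta)\ge\ve\}$ Carleson, and Lemma \ref{c:porous} applied with $\mu=\cH^d|_\Gamma$ makes the non-dense family $\cB_2=\cP_{M,\ve}$ (built from $E$) Carleson as well. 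The union $\cB=\cB_1\cup\cB_2$ then satisfies the hypothesis of Lemma \ref{lem:BP-Bad}, producing $\Delta'\subset\Delta$ with $\ell(\Delta')\sim \ell(\Delta)$, $\bbeta_\Gamma(\Delta')<\ve$, and $\dist(y,E)\le \ve\ell(\Delta')$ for every $y\in MB_{\Delta'}\cap\Gamma$. Lemma \ref{lem:dense-flat} then yields a ball $B_0\subset B_{\Delta'}\setminus\cnj\Omega$ of radius $\sim \ell(\Delta')$; since $\Omega_{E'}^-\subset\Omega$, $B_0$ is also disjoint from $\cnj{\Omega_{E'}^-}$, and choosing initially $\ell(\Delta)=r/C'$ for a suitable universal $C'$ places $B_0$ inside $B(x,r)$, yielding the desired Corkscrew ball.

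The principal obstacle is matching \emph{both} flatness and the uniform density of $E$ throughout $MB_{\Delta'}\cap\Gamma$ at the same comparable-scale subcube: the density hypothesis of Lemma \ref{lem:dense-flat} is a statement over all of $MB_{\Delta'}\cap\Gamma$ rather than at a single point, so Lemma \ref{lem:BP-Bad} must be fed a Carleson family whose defining condition also lives on the dilate $MB_\Delta$. This is precisely why Lemma \ref{c:porous} is combined with Theorem \ref{th:BWGL}, and why it is crucial that $E'$ was preselected via Lemma \ref{lem:BP-E'} so that every subcube meeting $E'$ captures a definite fraction of $E$.
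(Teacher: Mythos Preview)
Your two-case split and the rectifiability argument for the second case are essentially the paper's approach; the use of Theorem~\ref{th:BWGL}, Lemma~\ref{c:porous}, Lemma~\ref{lem:BP-Bad}, and Lemma~\ref{lem:dense-flat} is exactly right. The problem is how you split. You divide according to whether $x\in\d\Omega_{E'}^-\cap\Omega$ or $x\in\cnj{E'}$, whereas the paper divides according to whether $\dist(x,E')\ge r/2$ or $\dist(x,E')<r/2$. These do not coincide, and the difference creates a real gap.

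Concretely, take $x\in\d\Omega_{E'}^-\cap\Omega$ lying on $\d(\lambda Q)$ with $Q\in\d\widetilde{\cC_{E'}}^-$, and a scale $r$ with $\ell(Q)\ll r\ll r_0$. Your claim that one can ``pass to a larger non-selected Whitney cube in the vicinity of $x$'' is not justified: the only fact you invoke is $\Omega_{E'}^-\subset B(x_0,C^-r_0)$, which helps only when $r\gtrsim r_0$. At an intermediate scale, every Whitney cube $Q'\subset B(x,r)\cap\Omega$ of size $\sim r$ may perfectly well satisfy $C_0Q'\cap E'\neq\emptyset$ and hence lie in $\cC_{E'}^-$; there is no mechanism forcing a non-selected cube to appear. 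The paper avoids this by noting that for such $x$ one always has $\dist(x,E')\lesssim\ell(Q)$ (since $Q$, or a nearby cube in $\cC_{E'}^-$, satisfies $C_0Q\cap E'\neq\emptyset$), so once $r\gtrsim\ell(Q)$ one automatically has $\dist(x,E')<r/2$ and the rectifiability branch applies: pick $x'\in E'$ with $|x-x'|<r/2$, choose $\Delta\ni x'$ with $B_\Delta\subset B(x,r)$, and run your flat/dense-cube argument there. Replacing your location-based split by the $\dist(x,E')$ vs.\ $r/2$ split closes the gap with no further changes.
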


\begin{proof}
It is enough to show that for every $x\in \partial \Omega_{E'}^-$ there exists a ball $B_0 \subset B(x,r)\setminus \cnj{\Omega^{-}_{E'}}$ with radius $r(B_0) \sim r$. We call such $B_0$ a Corkscrew ball.

Let $\dist(x,E')<r/2$ and $x'\in E$ be so that $|x'-x|<r/2$. Then there is $\D \in \cD(\G)$ containing $x'$ with $\ell(\D) \sim r$ such that \[B_\D\subseteq  B(x',r/2)\subseteq B(x,r).\] If $\D$ satisfies the flatness and density conditions of Lemma \ref{lem:dense-flat}, then the existence of a ball $B_0$ with the desired properties follows by that lemma. If not, we set $\cB$ to be collection of cubes for which either $\bbeta_\G(\Delta) \geq\varepsilon$ or there exists $x\in M B_\Delta$ such that $\dist(x, E') > \varepsilon \ell(\Delta)$. In light of Theorem \ref{th:BWGL} and Lemma \ref{c:porous}, this is a Carleson family and thus, by Lemma \ref{lem:BP-Bad} there exists $\D' \subset \D$ such that $\D' \in \cD(\G) \backslash \cB$ and $\ell(\D')\sim \ell(\D) \sim r$. We apply once more Lemma \ref{lem:dense-flat} and obtain a Corkscrew ball $B_0$.

Let $\dist(x,E')\geq r/2$. Then there exists $Q\in \d \widetilde{\cC}_{E'}^{-}$ such that $x\in \d \lambda Q$. If $R\in \cW(\Omega)$ is the Whitney cube containing $x$, it is clear that $R\not\in \widetilde{\cC}_{E'}^{-}$. Since $\ell(R) \sim\ell(Q')$ for any Whitney cube $Q' \sim R$, we have that $R'=R\backslash\bigcup_{Q'\in\widetilde{\cC}_{E'}^{-}} \lambda Q'$ is a rectangular prism with all side-lengths comparable to $\ell(R)\sim_{d} \ell(Q)$. In light of $C_{0}Q\cap E'\neq\emptyset$ and $x\in \lambda Q\subseteq C_{0}Q$, it holds that
\[r\leq 2\dist(x,E') \leq 2\diam  C_{0}Q\lec_{d} \ell(R),\] 
and clearly $B(x, r) \cap R'$ contains a Corkscrew ball of radius $\sim r$.

\end{proof}

It only remains to show that the boundary of the new domain $\Omega^{-}_{E'}$ has finite $d$-Hausdorff measure. 

\begin{lemma}
$\cH^d( \d \Omega^{-}_{E'}) < \infty$.
\end{lemma}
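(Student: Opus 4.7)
The plan is to split
\[
\partial \Omega_{E'}^- = (\partial \Omega_{E'}^- \cap \partial \Omega) \cup (\partial \Omega_{E'}^- \cap \Omega)
\]
and control each piece separately. The trace on $\partial \Omega$ is contained in $\overline{E'}$ (by the last assertion of \Lemma{omegaE-} applied to $E'$), which in turn lies inside the compact set $E \subseteq \partial \Omega \cap B(x_0,r_0)$; local finiteness of $\cH^d|_{\partial \Omega}$ therefore bounds $\cH^d(\partial \Omega_{E'}^- \cap \partial \Omega)$ by $\cH^d(E) < \infty$. The interior piece is contained in $\bigcup_{Q \in \d\widetilde{\cC}_{E'}^-} \partial(\lambda Q)$, so the remaining task is to prove
\[
\sum_{Q \in \d\widetilde{\cC}_{E'}^-} \ell(Q)^d < \infty.
\]

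For each $Q \in \d\widetilde{\cC}_{E'}^-$, pick a Whitney neighbor $R=R(Q)\sim Q$ with $R \notin \widetilde{\cC_{E'}}^-$. If $\ell(R) > r_0$ then $\ell(Q) \sim r_0$, and since $\widetilde{\cC_{E'}}^-$ is confined to a bounded neighborhood of $B(x_0,r_0)$ there are only finitely many such cubes, contributing $O(r_0^d)$. Otherwise $\ell(R) \leq r_0$ and $R \notin \cC_{E'}^-$, so $C_0 R \cap E' = \emptyset$. Let $y_R \in \partial \Omega$ realize $\dist(R,\partial \Omega)$; the Whitney property gives $|y_R - R| \sim \ell(Q)$, and choosing $C_0$ sufficiently large (compatibly with \Lemma{omegaE-}) forces $y_R \in C_0 R$, hence $y_R \notin E'$. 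Setting $F := \partial \Omega \setminus E'$ and picking $c>0$ small, we get $B(y_R, c\ell(Q)) \subseteq C_0 R$, so $B(y_R,c\ell(Q)) \cap \partial \Omega \subseteq F$. Lower $d$-ADR then yields
\[
\cH^d(B(y_R, c\ell(Q)) \cap F) \gec \ell(Q)^d.
\]

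The main obstacle is a bounded-overlap estimate: for every $z \in F$,
\[
\#\{Q \in \d\widetilde{\cC}_{E'}^- : z \in B(y_{R(Q)}, c\ell(Q)),\ \ell(R(Q)) \leq r_0\} \lec 1.
\]
The key observation is that every such $Q$ satisfies $\ell(Q) \sim \dist(z,E')$. For the upper bound, the path definition of $\widetilde{\cC_{E'}}^-$ supplies $Q^* \in \cC_{E'}^-$ with $d_\Omega(Q,Q^*) \leq \tilde C$, hence $\ell(Q) \sim \ell(Q^*)$ and $|Q-Q^*| \lec \ell(Q)$; combined with a point $y^* \in C_0 Q^* \cap E'$ and the chain $z \to y_R \to R \to Q \to Q^* \to y^*$, this gives $\dist(z,E') \leq |z-y^*| \lec \ell(Q)$. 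For the lower bound, $C_0 R \cap E' = \emptyset$ forces $\dist(R,E') \gec C_0 \ell(R)$, and subtracting $|z-R| \lec \ell(Q)$ leaves $\dist(z,E') \gec \ell(Q)$ once $C_0$ is taken large enough to absorb the additive constants. Since $\ell(Q)$ is thereby pinned to a single Whitney scale, and at that scale the number of Whitney cubes whose nearest boundary point lies within $c\ell(Q)$ of $z$ is uniformly bounded, the overlap estimate follows.

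A standard double-counting then concludes:
\[
\sum_{Q \text{ in case B}} \ell(Q)^d \lec \sum_Q \cH^d(B(y_R, c\ell(Q)) \cap F) = \int_F \#\{Q : z \in B(y_R, c\ell(Q))\}\, d\cH^d(z) \lec \cH^d(\partial \Omega \cap B(x_0, C r_0)),
\]
which is finite by local finiteness of $\cH^d|_{\partial \Omega}$. Together with the bound on $\partial \Omega_{E'}^- \cap \partial \Omega$ and the finite Case A contribution, this yields $\cH^d(\partial \Omega_{E'}^-) < \infty$.
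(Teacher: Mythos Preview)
Your proof is correct and follows the same overall strategy as the paper: split $\partial\Omega_{E'}^-$ into its trace on $\partial\Omega$ (handled by $\overline{E'}\subset E$ and local finiteness) and the interior portion, then bound $\sum_{Q}\ell(Q)^d$ over boundary Whitney cubes by associating to each such $Q$ a piece of $\partial\Omega$ of mass $\gtrsim\ell(Q)^d$ via lower $d$-ADR, establishing bounded overlap, and summing against $\cH^d(\partial\Omega\cap B(x_0,Cr_0))$.

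The technical difference is in the bounded-overlap step. The paper works with $Q\in\partial\cC_{E'}^-$ (not the tilde family), assigns to each $Q$ a \emph{metric cube} $\Delta_Q\in\cD(\partial\Omega)$ chosen maximal with $3B_{\Delta_Q}\cap E'=\emptyset$, and argues that (i) the map $Q\mapsto\Delta_Q$ has bounded fibers and (ii) the resulting family $\{\Delta_Q\}$ is pairwise disjoint; the passage to $\partial\widetilde{\cC}_{E'}^-$ is then handled via \Remark{harnack}. You instead work directly with $Q\in\partial\widetilde{\cC}_{E'}^-$, use balls $B(y_R,c\ell(Q))$ centered at a nearest boundary point, and prove bounded overlap by the scale-pinning observation $\ell(Q)\sim\dist(z,E')$ (upper bound from the short Harnack chain to a cube in $\cC_{E'}^-$, lower bound from $C_0R\cap E'=\emptyset$). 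Your route is a bit more elementary in that it avoids the dyadic machinery on $\partial\Omega$ and the auxiliary Remark, at the cost of tracking the dependence on $C_0$ and $\tilde C$ in the two-sided estimate; the paper's route packages the overlap into a disjointness statement, which is cleaner once the metric cubes are in hand. Both arguments exploit exactly the same geometric content.
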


\begin{proof}

If $Q \in \d \cC_{E'}$, there exists $Q' \sim Q$ which is not in $\d \cC^-_{E'}$, i.e., $C_0 Q' \cap  E'= \emptyset$. We can pick $C_0>0$ so large  that there exists $\D \in \cD(\partial \Omega)$ which is contained in $C_0Q'$ and $\ell(\D) \sim \ell(Q') \sim \ell(Q)$. Let $\Delta_Q \in \cD(\partial \Omega)$ be the maximal metric cube such that $\Delta_Q \in C_0 Q'$, $\ell(\D) \sim \ell(Q') \sim \ell(Q)$ and $3 B_{\Delta_Q} \cap E' = \emptyset$. We also let $y_Q=x_{\D_Q}$ (recall that $x_{\D_Q}$ is the center of $B_{\D_Q}$). 

\Claim For any fixed metric cube $\D \in \cD(\d \Omega)$, there exists $N_{0}=N_{0}(d)>0$ so that $\sharp \{ Q\in \d \cC^-_{E'}: \dq = \Delta\} \leq N_0$.  To see this, fix $\Delta \in \cD(\d \Omega)$ and suppose that $\Delta_{Q}=\Delta$, for some $Q \in \d \cC^-_{E'}$. By the definition of $\D_Q$, there exists some (possibly large) positive absolute constant $\sigma$ so that any cube $Q\in \d \cC^-_{E'}$ for which $\Delta=\dq$ is contained in the ball $B(x_\D, \sigma \ell(\Delta))$. Since all $Q \in \d \cC^-_{E'}$ such that $\dq=\Delta$ are disjoint and have comparable side-lengths, by volume considerations the claim follows.

Notice now that for $Q \in \d \cC^-_{E'}$ we have $\ell(Q) \sim \dist(Q, E') \lesssim r_0$ and thus, $\ell(\dq) \lesssim r_0$. Moreover, $\dist(\dq, x_0) \leq \dist(\dq, Q) + \dist(Q, x_0) \leq \dist(y_Q, Q) + r_0 \lesssim \ell(Q) + r_0 \lesssim r_0$. 

We set $\mathcal{S}:=\{ \Delta \in \cD: \Delta=\dq\,\, \textup{for some} \,\, Q \in \d \cC^-_{E'}\}$ which is a disjoint family of cubes. Note also that there exists $A>0$ so that $\mathcal S$ is contained in $B(x_0, A r_0)$. This follows easily from $\ell(\dq) \lesssim r_0$ and $\dist(\dq, x_0)\lesssim r_0$. Therefore, using the lower $d$-ADR property of $\partial \Omega$ we obtain that

\begin{align*}
\sum_{Q\in \d\cC^-_{E'}}\ell(Q)^{d}  &\sim \sum_{Q\in \d\cC^-_{E'}}  \ell(\dq)^d  \lesssim \sum_{Q\in \d \cC^-_{E'}}  \cH^d(\D_Q \cap \d\Omega)\\
&\lesssim_{N_0} \sum_{\D \in S} \cH^d(\D \cap \d\Omega)
 \leq  \cH^d(B(x_0, A r_0) \cap \d\Omega),
\end{align*}
where in the penultimate inequality we used that there are at most $N_0$ number of metric cubes such that $\Delta=\D_Q$ and in the last one that $\mathcal S$ is contained in $B(x_0, A r_0)$. Since $E'\subset B(x_0, r_0) \cap \d \Omega$ and $\cH^d|_{\partial \Omega}$ is a locally finite measure, the lemma follows from Remark \ref{rem:harnack} and the definition of (the boundary of) $\Omega^-_{E'}$.
\end{proof}

Let us denote by $\omega^-$  and $\omega$ the harmonic measure in the domain $\Omega^-_{E'}$ and $\Omega$ respectively, with pole at a fixed point of $\Omega^-_{E'}$ (and thus, of $\Omega$) so that its distance to the boundary of $\Omega^-_{E'}$ is comparable to $r_0$. Then, by \cite{Badger12} we conclude that $\cH^d|_{\partial \Omega^-_{E'}} \ll \omega^-$ and by the maximum principle, this implies that $\cH^d|_{E'} \ll \omega|_{E'}$.

\section{End of the proof of Theorem \ref{th:main}}

Suppose that there exists $F\subset \d \Omega$ such that $\omega(F)=0$ but $\cH^d(F)>0$. 	Then there exists a bi-Lipschitz image $\G$ such that $\cH^d(\G \cap F)>0$. Arguing as in the beginning of section \ref{sec:Core}, we pick $x_0 \in \Gamma \cap F$ and $r_0>0$ such that 
$$\cH^d(\G \cap F \cap B(x_0,r_0)) \gec \cH^d(\d \Omega\cap B(x_0,r_0) )>0.$$
Moreover, since $\cH^d|_{\partial \Omega}$ is Radon, we can find a compact set $E \subset B(x,r) \cap \G \cap F$ such that 
$$\cH^d(E) \gec \cH^d(B(x,r) \cap \G \cap F)>0.$$
Let now $E' \subset E$ be as in Lemma \ref{lem:BP-E'} and recall that $\cH^d(E')>0$. The latter implies that $\omega(E')>0$ since $\cH^d|_{E'}\ll \omega|_{E'}$. Then
$$0<\omega(E') \leq  \omega(E) \leq \omega(F) =0,$$
which leads us to a contradiction. Therefore, $\cH^d\ll \omega$ on $\d \Omega$ apart from a set of $\cH^d$-measure zero, which concludes the proof of Theorem \ref{th:main}.

\appendix

\section{} \label{sec:Appendix}

We present now the construction of the counterexample mentioned in section \ref{sec:intro}

Let $Q_0$ be the unit cube of $\bR^{d+1}$, $s \in (d-1,d)$ and $E\subset  Q_0$ is an Ahlfors-David $s$-regular set so that its complement is a uniform domain. Let $E_{2^{-n}}$ denote the union of all dyadic cubes of side-length $2^{-n}$ that intersect $E$. Then 
\begin{align}\label{eq:counter}
 \cH^d(\d E_{2^{-n}}) &\lesssim \sum_{Q\cap E \neq \emptyset: \ell(Q)=2^{-n}} \ell(Q)^d \notag \\
 &=2^{-n(d-s)}\sum_{Q\cap E \neq \emptyset: \ell(Q)=2^{-n}} \ell(Q)^s \lesssim 2^{-n(d-s)} \cH^s(E).
 \end{align}
Let  $\cW(\bR^{d+1})$ be for the Whitney decomposition of the upper half-space $\bR^{d+1}_+$. For each $W \in \cW(\bR^{d+1}_+)$, we let $T_W$ be the affine similarity that maps $Q_0$ to $W$ and set $E_W=T_W(E_{\ell(W)})$ so that
$$\cH^d(\d E_W)=\ell(W)^d \cH^d(\d E_{\ell(W)}) \lesssim \ell(W)^{2d-s} \cH^s(E),$$
where in the last inequality we used \eqref{eq:counter}. This estimate implies that if we define $\Omega:= \bR^{d+1}_+ \backslash \bigcup_{W \in \cW(\bR^{d+1}_+)} E_W$ then $\cH^d|_{\d \Omega}$ is locally finite. By construction it is not hard to see that $\Omega$ is uniform and its boundary $d$-rectifiable.

Notice now that by the Ahlfors-David $s$-regularity of $E$ one can deduce that $\cH^s_\infty(B \cap \d \Omega) \gec r(B)^s$ (with uniform contants), where $B$ is a ball of radius $r(B)$ centered on $\d\Omega$ and $\cH^s_\infty$ stands for the $s$-Hausdorff content. Therefore, by a result proved by Bourgain in \cite{Bo} (for a proof see also \cite[Lemma 4.1]{AMT2}) we have that there exists $c_0 \in (0,1)$ such that $\omega^{x_B}(B)>c_0$, where $B$ is a ball centered on $\d \Omega$ and $\omega^{x_B}$ is the harmonic measure in $\Omega$ with pole at $x_B$ (a Corkscrew point of $B$). With this in hand, we combine \cite[Lemma 4.2]{AMT2} and \cite[Lemma 3.6]{Ai} and obtain that the harmonic measure in $\Omega$ is doubling. 

Take now a ball $B(x,r)$ such that $ x \in \bR^d$. Note that there exists a Whitney cube $W_0 \subset B(x,r) \cap \bR^{d+1}_+$ such that $\ell(W_0) \sim r$. Then, by doubling, $ \omega(B(x,r)) \lesssim \omega(W_0)$. By Lebesgue's density theorem we have that,
$$ 1 \lec \lim_{r \to 0} \frac{\omega(W_0)}{\omega(B(x,r))} \leq \lim_{r \to 0} \frac{\omega(B(x,r) \setminus \bR^d)}{\omega(B(x,r))} \to 0, \,\,\text{for} \,\,\omega\text{--a.e.}\,\, x \in \bR^d.$$
Therefore, $\omega(\bR^d)=0$ and for any set $F \subset \bR^d$ such that $0<\cH^d(F)<\infty$ we will have $\omega (F)=0$, which concludes our proof.

\bibliographystyle{amsplain}

\def\cprime{$'$}
\providecommand{\bysame}{\leavevmode\hbox to3em{\hrulefill}\thinspace}
\providecommand{\MR}{\relax\ifhmode\unskip\space\fi MR }
\providecommand{\MRhref}[2]{%
  \href{http://www.ams.org/mathscinet-getitem?mr=#1}{#2}
}
\providecommand{\href}[2]{#2}

\end{document}